\allowdisplaybreaks \numberwithin{equation}{section}
\theoremstyle{plain}
\newtheorem{theorem}{Theorem}[section]
\newtheorem{proposition}[theorem]{Proposition}
\newtheorem{lemma}[theorem]{Lemma}
\newtheorem{corollary}[theorem]{Corollary}
\theoremstyle{definition}
\newtheorem{remark}[theorem]{Remark}
\newcommand{\1}{ 1\!\!1}
\newcommand{\res}{\mathop{\hbox{\vrule height 7pt width .5pt depth
0pt\vrule height .5pt width 6pt depth0pt}}\nolimits}
\def \dist {\mathop {\rm dist}\nolimits}
\def \e {\epsilon}
\def \a {{a}}
\def \re {\mathbb R}
\def \R {\mathbb R}
\def \Om {\Omega}
\newcommand{\de}{\mathrm{d}}
\def \diam {\mathop{\rm diam}}
\newcommand{\wto}{\rightharpoonup}
\newcommand{\wsto}{\stackrel{*}\wto}
\newcommand{\norma}[1]{\lVert{#1}\rVert}
\newcommand{\mygraphic}[1]{\includegraphics[height=#1]{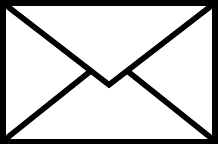}}
\newcommand{\myenv}{(\raisebox{0pt}{\mygraphic{.6em}})}
\newcommand{\cF}{\mathcal{F}}
\newcommand{\cH}{\mathcal{H}}
\newcommand{\cL}{\mathcal{L}}
\newcommand{\cP}{\mathcal{P}}
\newcommand{\B}{B}
\newcommand{\average}{{\mathchoice {\kern1ex\vcenter{\hrule
height.4pt width 8pt depth0pt}
\kern-11pt} {\kern1ex\vcenter{\hrule height.4pt width 4.3pt
depth0pt} \kern-7pt} {} {} }}
\newdimen\mex
\def\niv{\mathrel{\hbox{\hglue -0.4\mex
\vrule \@height 1.4\mex \@width .14\mex
\vrule \@height .14\mex \@width .75\mex
\hglue -0.2\mex}}}
\title{Upscaling of screw dislocations with increasing tangential strain}
\author{Ilaria Lucardesi}
\address[I.\@ Lucardesi]{Institut \'Elie Cartan de Lorraine, B.P.\@ 70239, 54506 Vandoeuvre-l\`es-Nancy, France}
\email{ilaria.lucardesi@univ-lorraine.fr}
\author{Marco Morandotti}
\address[M.\@ Morandotti \myenv]{Fakult\"at f\"ur Mathematik, Technische Universtit\"at M\"unchen, Boltzmannstrasse, 3, 85748 Garching, Germany}
\email{marco.morandotti@ma.tum.de}
\author{Riccardo Scala}
\address[R.\@ Scala]{Dipartimento di Matematica ``G.~Castelnuovo'', La Sapienza Universit\`a di Roma, Piazzale Aldo Moro, 5, 00185 Roma, Italy}
\email{rscala@fc.ul.pt}
\author{Davide Zucco}
\address[D.\@ Zucco]{Dipartimento di Matematica ``G.~Peano'', Universit\`a di Torino, Via Carlo Alberto, 10, 10123 Torino, Italy}
\email{davide.zucco@unito.it}
\date{\today}
\begin{document}

\maketitle 

\begin{abstract}
The upscaling of a system of screw dislocations in a material subject to an external strain is studied.
The $\Gamma$-limit of a suitable rescaling of the renormalized energy is characterized in the space of probability measures.
This corresponds to a discrete-to-continuum limit of the dislocations, which, as a byproduct, provides information on their distribution when the circulation of the tangential component of the external strain becomes larger and larger.
In particular, dislocations are shown to concentrate at the boundary of the material and to distribute as the limiting external strain.
\end{abstract}

\smallskip

\noindent\textbf{Keywords}: Dislocations, $\Gamma$-convergence, discrete-to-continuum limit, core radius approach, Ginzburg-Landau vortices, divergence-measure fields.

\smallskip
\noindent\textbf{2010 MSC}: {74E15 (35J25, 74B05, 49J40).}

\section{Introduction}

Dislocations are line defects in the lattice structure of crystalline materials, that have been first observed in metals by electron microscopy in 1956 by Hirsch, Horne, and Whelan \cite{HHW}, thus providing experimental support to the theoretical work of Volterra \cite{volterra}.
The literature on dislocations is vast, including physical, engineering, and mathematical approaches; we refer the reader to the classical monographs \cite{HL,HB,nabarro} for general treaties.
From the mechanical point of view, dislocations are of the utmost importance to understand some properties of materials, especially those related to their plastic behaviour: in 1934, Orowan \cite{orowan}, Polanyi \cite{polanyi}, and Taylor \cite{taylor} theorized that dislocations are the ultimate cause of plasticity, thus proposing that the presence and motion of defects at the atomic scale is responsible for macroscopic effects.

It is therefore relevant to bridge phenomena happening at different length scales by means of a suitable limiting process which allows one to derive an averaged, macroscopic quantity from discrete, microscopic ones. A macroscopic strain gradient theory for plasticity from a model of discrete dislocations was obtained by Garroni, Leoni, and Ponsiglione \cite{GLP}.
Further and more recent results include \cite{ADLGP}, where the dynamics of topological singularities in two dimensions is studied and compared to that of Ginzburg-Landau vortices, and \cite{vMM2018}, where a discrete-to-continuum limit for particles with an annihilation rule in dimension one is obtained.

\smallskip

In this paper we focus our attention on an isotropic crystal which occupies a vertical cylindrical region $\Omega \times \mathbb R$ and which undergoes antiplane shear. In this case, the dislocation lines are parallel to the lattice mismatch, here vertical, and the dislocations are called of \emph{screw} type. According to the model proposed by Cermelli and Gurtin in \cite{CG}, the system is fully described by the cross section of the material. This allows us to work in $\Omega\subset\mathbb{R}^2$ so that  dislocations are represented by points $\{a_i\}_i\subset \Omega$, corresponding to the intersections of the dislocation lines with the cross section. Throughout the paper, without any further explicit reference, we assume $\Omega$ to be a bounded open domain with Lipschitz boundary and the lattice spacing of the material to be $2\pi$. 

The stressed material is described by the \emph{strain field}, a vector field with curl concentrated on the discrete set of dislocations $\{a_i\}_i$, and minimizing the energy
$$
F\mapsto \int_\Omega |F(x)|^2\, \de x.
$$
Due to the singularity of the curl, such energy is not finite. The usual strategy to circumvent this obstruction consists in avoiding the dislocation \emph{cores}, small disks $\{B_\e(a_i)\}_i$ around each dislocation $a_i$ with radius $\e>0$ sufficiently small, and then computing the energy in the resulting perforated domain,
i.e.,
\begin{equation*}
\mathcal E_\e(a_1,\dots,a_n)\coloneqq\min \frac{1}{2} \,\int_{\Omega\setminus \bigcup_i \overline B_\epsilon(a_i)} |F(x)|^2\,\de x,
\end{equation*}
where the minimum is taken among all vector fields in $L^2(\Omega\setminus \bigcup_i \overline B_\epsilon(a_i); \re^2)$, with zero curl in the sense of distributions $\mathcal D'\left(\Omega\setminus \bigcup_i \overline B_\epsilon(a_i)\right)$, and satisfying the condition $\langle F\cdot \tau,1 \rangle_\gamma=2\pi m$ for an arbitrary simple closed curve $\gamma$ in $\Omega\setminus \bigcup_i \overline B_\epsilon(a_i)$ winding once counterclockwise around $m$ dislocations. Here and henceforth  $\tau$ denotes the tangent unit vector to $\gamma$, $F\cdot \tau$ must be intended in the sense of traces, and $\langle \cdot ,\cdot \rangle_\gamma$ denotes the pairing between $H^{-1/2}(\gamma)$ and $H^{1/2}(\gamma)$ (see \cite{ChenFrid}). 

Once the non-integrability of the strain field around the dislocations is removed, classical variational techniques can be applied. For this reason, the  \emph{core radius approach} is standard in the literature and it is employed in different contexts, such as linear elasticity (see, for instance, \cite{nabarro,TOP,VKLLO}; also \cite{ADLGP,BM,CG,pons} for screw dislocations and \cite{CL} for edge dislocations), the theory of Ginzburg-Landau vortices (see, for instance, \cite{BBH,SaSe} and the refereces therein), and liquid crystals (see, for instance, \cite{GSV}).  

In the setting above, the dynamics of dislocations is determined by the Neumann boundary condition satisfied by the strain field (the natural boundary conditions coming from the Euler-Lagrange equation associated to the energy above), implying that the energy of the system decreases as the dislocations approach the boundary of the domain. In other words, as time passes, 
dislocations move towards the boundary and leave the domain in finite time (see \cite{BFLM,HM}). 

Different types of boundary condition can be imposed if one is interested in the confinement of the dislocations inside the material.
We impose a Dirichlet boundary condition for the strain field: physically, this corresponds to stressing the material by means of a prescribed external strain (see \cite{BBH, LMSZ, SS00}).
More specifically, given $n\in \mathbb N$ we prescribe the tangential strain on the boundary of the exterior domain $\partial \Om$ to be a distribution $f^n\in H^{-1/2}(\partial \Om)$ such that $\langle f^n,1\rangle_{\partial\Omega}=2\pi n$.
Then, in the minimization problem above, we also require that $F\cdot \tau=f^n$ on $\partial \Om\setminus \bigcup_{i=1}^n \overline{B}_\e(a_i)$, where $\tau$ denotes the tangent unit vector to $\partial \Omega$ (which exists $\cH^1$-a.~e.~on $\partial \Omega$ thanks to the regularity assumption on $\Omega$). 
This Dirichlet boundary condition reflects in \emph{confinement and separation effects}: for every $\epsilon$ sufficiently small one can observe $n$ \emph{distinct} dislocations $(a_1,\ldots,a_n)$ \emph{inside} $\Om$ (see, e.g., \cite{BBH,SaSe} for a comment on the topological necessity of the presence of exactly $n$ defects). Indeed, for $\epsilon$ sufficiently small, it can be shown (see \cite{LMSZ} and also \cite{BBH,SaSe}) that the energy behaves like
\[
\mathcal E_\e(a_1,\dots, a_n)=-\pi n\log \epsilon+ \mathcal E_0(a_1, \dots a_n)+o(1),
\] 
which is infinity whenever one dislocation is on the boundary $\partial \Omega$ or when two dislocations collide.
The \emph{renormalized energy}  $\mathcal E_0$ is also related to the so-called \textit{Hadamard finite part} of a divergent integral (see \cite{Had}) and 
keeps into account the energetic dependence of the position of the dislocations $\{a_i\}_i$ inside $\Omega$. 
One has in particular that
\begin{equation}\label{recast}
\mathcal E_0(a_1, \dots, a_n)=\sum_{i=1}^n\mathcal E_\text{self}(a_i)+ \sum_{i\neq j}\mathcal E_\text{int}(a_i,a_j)
\end{equation}
where the \emph{self energy}  $\mathcal E_\text{self}$, responsible for the contribution of individual dislocations, is given by 
\[
\mathcal E_\text{self}(a_i) \coloneqq \pi\log d(a_i) +\frac{1}{2}\int_{\Om\setminus \overline{B}_{d(\a_i)}(\a_i)}|\nabla \phi_i(x) + \nabla w_i(x)|^2\, \de x+\frac{1}{2}\int_{B_{d(\a_i)}(\a_i)}|\nabla w_i(x)|^2\,\de x,
\]
with $d(a_i)\coloneqq \mathrm{dist}(\a_i, \partial \Omega)$, while the \emph{interaction energy} $\mathcal E_\text{int}$, which encodes the mutual position of two dislocations, is 
\[
\mathcal E_\text{int}(a_i,a_j) \coloneqq \int_\Omega (\nabla \phi_i(x)+\nabla w_i(x))\cdot(\nabla \phi_j(x)+\nabla w_j(x))\,\de x.
\] 
Here the functions $\phi_i$ and $w_i$ are the solutions ($w_i$ is defined up to an additive constant) to
\begin{equation}\label{M2}
\begin{cases}
\Delta \phi_i=2\pi\delta_{a_i} & \text{in $\Omega$,} \\
\phi_i(x)=\log|x-a_i| & \text{for $x\in\partial\Omega$,}
\end{cases}
\qquad\text{and}\qquad
\begin{cases}
\Delta w_i=0 & \text{in $\Omega$,} \\
\partial_\nu w_i=\tfrac1n f^n-\partial_\nu\phi_i & \text{on $\partial\Omega$.}
\end{cases}
\end{equation}
Notice that the solution $\phi_i$, which in principle is searched only in $\Omega$, is well defined also in the complement $\mathbb R^2\setminus \overline \Omega$: clearly its explicit formula is $\phi_i(x)=\log|x-a_i|$ so that, given a regular bounded domain $B$ containing $\overline \Omega$, we have $\phi_i\in W^{1,p}(B)$ for every $1\leq p<2$ (the value $p=2$ is excluded, due to the singularity of the logarithm).
Without loss of generality, we always consider $w_i$ as the solution with zero average in $\Omega$. 

Apart from the following existence result, that has been proved in \cite{LMSZ} (see also \cite{BBH, SaSe} for related results for Ginzburg-Landau vortices), nothing is known on the positions of the dislocations minimizing the energy functional \eqref{recast}.
\begin{theorem}[Existence]
Given $n\in\mathbb N$, the minimization problem
\begin{equation}\label{probmin}
\min \mathcal E_0(a_1,\dots, a_n)
\end{equation}
among all $n$-tuples of distinct points in $\Omega$ has a solution.
\end{theorem}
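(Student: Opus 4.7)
\smallskip
\textbf{Proof plan.} I would apply the direct method of the calculus of variations on the \emph{admissible set}
\[
U \coloneqq \bigl\{(a_1,\dots,a_n)\in \Omega^n : a_i\neq a_j \text{ for all } i\neq j\bigr\},
\]
which is non-empty; at any $(a_1,\dots,a_n)\in U$ the PDEs in \eqref{M2} admit solutions $\phi_i$, $w_i$ yielding finite integrals in $\mathcal E_\text{self}$ and $\mathcal E_\text{int}$, so that $\inf_U \mathcal E_0 < +\infty$. The heart of the argument is to combine a \emph{coercivity} property of $\mathcal E_0$ with respect to $\overline\Omega^n\setminus U$ with the \emph{continuity} of $\mathcal E_0$ on $U$.

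For coercivity I would show that $\mathcal E_0(a_1,\dots,a_n) \to +\infty$ whenever $\dist(a_i, \partial \Omega) \to 0$ for some $i$ or $|a_i - a_j| \to 0$ for some $i\neq j$: this is precisely the divergence pointed out in the discussion preceding the statement, and it can be read off from \eqref{recast}. For collisions, the dominant contribution in $\mathcal E_\text{int}(a_i,a_j)$ comes from $\int_\Omega \nabla \phi_i \cdot \nabla \phi_j\,\de x$ and diverges like $-2\pi\log|a_i-a_j|$, while the remaining terms stay bounded. For a dislocation approaching $\partial \Omega$, the two $L^2$-integrals entering $\mathcal E_\text{self}(a_i)$ diverge faster than the negative prefactor $\pi\log d(a_i)$, as shown in \cite{LMSZ}; heuristically, this reflects the singular behaviour of the Dirichlet datum $\log|x-a_i|$ for $\phi_i$ and of the Neumann datum $\partial_\nu\phi_i$ for $w_i$ as $a_i$ gets closer and closer to $\partial\Omega$.

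For continuity on $U$, the PDE data in \eqref{M2} depend smoothly on the dislocation positions, and classical elliptic estimates provide continuous dependence of $\phi_i$ and $w_i$ (say in $H^1_{\mathrm{loc}}(\overline\Omega \setminus \{a_i\})$) with respect to $a_i$. This transfers to each integral defining $\mathcal E_\text{self}$ and $\mathcal E_\text{int}$, yielding continuity of $\mathcal E_0$ on $U$.

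With both ingredients in hand the argument is standard: take a minimizing sequence $(a_1^k,\dots,a_n^k) \subset U$; by compactness of $\overline\Omega^n$ a subsequence converges to some $(a_1^*,\dots,a_n^*)\in \overline\Omega^n$; coercivity forces $(a_1^*,\dots,a_n^*) \in U$, for otherwise the energy would diverge, contradicting the finiteness of the infimum; continuity then yields $\mathcal E_0(a_1^*,\dots,a_n^*) = \inf_U \mathcal E_0$. The main obstacle is the boundary divergence $\dist(a_i,\partial\Omega)\to 0$: it is a delicate cancellation between a divergent negative prefactor and divergent positive $L^2$-integrals, for which one relies on the fine $\eps$-asymptotic expansion of the energy carried out in \cite{LMSZ}. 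The collision divergence, by contrast, is essentially read off from the explicit logarithmic singularity of the fundamental solution, and the continuity part reduces to standard elliptic stability estimates.
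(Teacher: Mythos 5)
First, a point of comparison: this theorem is not actually proved in the present paper --- it is imported from \cite{LMSZ} --- so there is no in-paper argument to measure yours against. Your direct-method skeleton (compactness of $\overline\Omega^{\,n}$, coercivity as the configuration degenerates towards $\partial\Omega$ or towards a collision, continuity of $\mathcal E_0$ on the open set $U$ of distinct interior $n$-tuples) is certainly the right architecture, and the continuity part is unproblematic: $\phi_i$ is explicit and the Neumann datum of $w_i$ depends continuously in $H^{-1/2}(\partial\Omega)$ on $a_i$ as long as $a_i$ ranges in a compact subset of $\Omega$.

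The gap is in the coercivity, which is precisely the step the paper flags as ``highly non trivial''. (i) For $d(a_i)\to0$ you prove nothing: you assert that the two positive integrals in $\mathcal E_{\mathrm{self}}(a_i)$ ``diverge faster'' than $\pi\log d(a_i)$ and cite \cite{LMSZ} --- the very reference from which the theorem is taken --- so as a self-contained argument this is circular; moreover, working directly on \eqref{recast} the claim is genuinely delicate, since one is comparing quantities that each behave like a constant times $\log d(a_i)$ with opposite signs, and the conclusion depends on the constants. (ii) For collisions, your claim that ``the remaining terms stay bounded'' fails in the coupled regime where $a_i$ and $a_j$ collide while simultaneously approaching $\partial\Omega$: there the $w$-terms and the self-energies blow up as well, and a term-by-term domination argument must disentangle several divergences of both signs. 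Both difficulties are resolved by the machinery of Sections~2--3 of this paper: the integration by parts leading to \eqref{M5} absorbs the $\pi\log d(a_i)$ prefactors, and Lemma~\ref{lemma-equic} then yields the lower bound \eqref{equic} (for fixed $n$ the factor $1/n^2$ is immaterial), in which $\norma{\phi^n}_{H^1(\B\setminus\overline\Omega)}^2$ is nonnegative and diverges like $-\log d(a_i)$ as $a_i\to\partial\Omega$ (because $|\nabla\log|\cdot-a_i||^2$ ceases to be integrable on the exterior side), while $-\pi\int\log|x-y|\,\de\mu^n\boxtimes\mu^n$ is bounded below by a constant and diverges at collisions. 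There is then no competition between divergent terms of opposite signs, and the direct method closes as you describe. I would recommend replacing the appeal to the ``fine $\eps$-asymptotic expansion'' of \cite{LMSZ} by this decomposition.
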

Note that the lack of compactness of the underlying space where the minimization problem is set makes the result highly non trivial. 
In this paper we address the \emph{upscaling} problem:  we study the asymptotic distribution in $\overline \Omega$, as $n\to\infty$, of the minimizing configurations of the renormalized energy $\mathcal E_0$. 
As the number of dislocations increases, it is not practical anymore to describe them as individual particles, but it is necessary to associate them a probability measure describing their distribution in $\overline \Omega$. This is usually achieved (see, e.g,
\cite{BJM2011, vMM2018,MPS2017, TZ2018}) by considering the \emph{empirical measures}
\begin{equation}\label{average}
\mu^n\coloneqq\frac1n\sum_{i=1}^n\delta_{a_i},
\end{equation}
with $\delta_{a_i}$ denoting the Dirac measure centered at $a_i$, and studying the $\Gamma$-limit of the sequence of functionals $\cF^n\colon\cP(\overline\Omega)\to\R\cup\{+\infty\}$ defined by
\begin{equation}\label{T9}
\cF^n(\mu^n)\coloneqq\begin{cases}
\displaystyle \frac{1}{n^2}\mathcal E_0(a_1,\dots, a_n) & \text{if $\mu^n$ is of the form \eqref{average},} \\
+\infty & \text{otherwise.}
\end{cases}
\end{equation}
Notice that the rescaling by $1/n^2$ does not affect the solution to the minimization problem \eqref{probmin}.
Moreover, it is the natural one in order to prevent the renormalized energy to diverge in the limit when $n\to\infty$. This is evident from the expression \eqref{recast} of the energy $\mathcal{E}_0$, since the contribution $\mathcal{E}_{\mathrm{self}}$ is the sum of $n$ quadratic terms and the contribution $\mathcal{E}_{\mathrm{int}}$ involves $n\times n$ pairwise interactions.

We are now ready to state the main result of the paper.
\begin{theorem}[Upscaling]\label{L1}
Let $n\in \mathbb N$ and $f^n\in H^{-1/2}(\partial \Om)$ with $\langle f^n,1\rangle_{\partial \Omega}=2\pi n$. Assume that $\tfrac1n f^n\to f$ strongly in $H^{-1/2}(\partial \Omega)$ as $n\to\infty$. Then the functionals $\cF^n$ in \eqref{T9} $\Gamma$-converge, with respect to the weak-* convergence in $\cP(\overline\Omega)$, as $n\to\infty$, to the functional $\cF^\infty\colon\cP(\overline\Omega)\to[0,+\infty]$ defined by
\begin{equation}\label{T1}
\cF^\infty(\mu)\coloneqq\begin{cases}
\displaystyle \frac12\int_\Omega |\nabla U(x)|^2\,\de x  & \text{if $\mu\in H^{-1}(B)$,} \\
+\infty & \text{otherwise},
\end{cases}
\end{equation}
where $B$ is any regular bounded domain containing $\overline\Omega$ and
the function $U\in H^1(\Omega)$ solves
\begin{equation}\label{T2}
\begin{cases}
\Delta U = 2\pi\mu & \text{in $\Om$,} \\
\partial_\nu U = f -2\pi\mu\res\partial \Om& \text{on $\partial\Omega$,}
\end{cases}
\end{equation}
in the sense that
$$\int_\Omega |\nabla U(x)|^2\,\de x=
2\pi\langle f,U\rangle_{\partial\Omega}-2\pi \int_{\overline\Omega} U(x)\,\de\mu(x).$$
\end{theorem}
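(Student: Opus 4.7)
The proof would follow the two-inequality pattern for $\Gamma$-convergence, after first recasting $\cF^n$ as the (renormalized) Dirichlet energy of an averaged potential. For an admissible configuration $(a_1,\ldots,a_n)$, I would set
\[
U^n(x)\coloneqq\frac{1}{n}\sum_{i=1}^n\bigl(\phi_i(x)+w_i(x)\bigr).
\]
Summing the equations in \eqref{M2} and using $\sum_i(\tfrac{1}{n}f^n-\partial_\nu\phi_i)=f^n-\sum_i\partial_\nu\phi_i$, one checks that $U^n$ is the discrete analogue of the solution of \eqref{T2}: $\Delta U^n=2\pi\mu^n$ in $\Omega$ and $\partial_\nu U^n=\tfrac{1}{n}f^n$ on $\partial\Omega$. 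Expanding $\sum_i\mathcal E_{\mathrm{self}}(a_i)+\sum_{i\ne j}\mathcal E_{\mathrm{int}}(a_i,a_j)$ and matching off-diagonal and diagonal contributions, the first task is to establish the identification
\[
\cF^n(\mu^n)=\frac12\,\mathrm{FP}\!\int_\Omega|\nabla U^n(x)|^2\,\de x,
\]
where $\mathrm{FP}$ denotes the Hadamard finite part obtained by integrating over $\Omega\setminus\bigcup_i B_{d(a_i)}(a_i)$, adding the regular interior contributions involving $\nabla w_i$, and inserting the renormalization $\tfrac{1}{n^2}\sum_i\pi\log d(a_i)$. The non-integrable diagonal terms $|\nabla\phi_i|^2\sim|x-a_i|^{-2}$ are precisely compensated by the self-energy correction $\pi\log d(a_i)$.

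For the liminf inequality, weak-$\ast$ relative compactness of $\{\mu^n\}$ in $\cP(\overline\Omega)$ is automatic. Under $\sup_n\cF^n(\mu^n)<+\infty$, the finite-part formula combined with standard $W^{1,p}$-regularity for two-dimensional logarithmic potentials ($p<2$) yields uniform $W^{1,p}(\Omega)$-bounds on $U^n$; extracting a subsequence $U^n\rightharpoonup U$, passing to the limit in the PDE for $U^n$, and using $\tfrac{1}{n}f^n\to f$ strongly in $H^{-1/2}(\partial\Omega)$, one identifies $U$ as the solution of \eqref{T2} for $\mu\coloneqq\lim\mu^n$. Restricting to the subdomains $\Omega\setminus\bigcup_i B_\eta(a_i^n)$ where $U^n$ has uniformly bounded $H^1$-norm, and invoking weak $L^2$-lower semicontinuity followed by $\eta\to 0$, yields $\liminf\cF^n(\mu^n)\ge\cF^\infty(\mu)$, provided one verifies that the rescaled correction $\tfrac{1}{n^2}\sum_i\pi\log d(a_i)$ is negligible, which follows from the energy bound forcing macroscopic dispersion of the points. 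As a by-product, $\mu\in H^{-1}(B)$.

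For the recovery sequence, I would quantize $\mu$: partition $\overline\Omega$ into $n$ Borel cells of $\mu$-mass $1/n$ with diameter of order $n^{-1/2}$, pick a representative $a_i^n$ in each (placing representatives on $\partial\Omega$ for cells where $\mu$ charges the boundary, at controlled interior distances otherwise), and set $\mu^n\coloneqq\tfrac{1}{n}\sum_i\delta_{a_i^n}$. Standard quantization estimates give convergence of $\mu^n$ to $\mu$ both weakly-$\ast$ in $\cP(\overline\Omega)$ and strongly in $W^{-1,p}(B)$, whence $U^n\to U$ in $W^{1,p}$; continuity of $\mathcal E_\mathrm{int}$ off the diagonal and vanishing of the renormalization corrections then yield $\limsup\cF^n(\mu^n)\le\cF^\infty(\mu)$.

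The principal obstacle is expected to be the quantitative bookkeeping of the finite-part corrections near $\partial\Omega$. Since the Dirichlet-type condition pushes dislocations outward and the physically relevant limits are expected to charge $\partial\Omega$, the recovery construction must place points at distance $\sim n^{-1/2}$ from $\partial\Omega$ while still producing the correct limiting boundary trace, and one must verify that the sum of self-energies $\pi\log d(a_i^n)$ is $o(n^2)$. Balancing the measure-quantization error, the grid mesh, and the boundary regularization is where the technical heart of the argument lies.
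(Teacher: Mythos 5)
Your overall skeleton (recast the energy through the averaged potentials $\phi^n,w^n$, prove compactness plus lower semicontinuity for the liminf, discretize $\mu$ for the limsup) matches the paper's strategy, but the way you organize the renormalization creates gaps that the paper's argument is specifically designed to avoid. The paper does not work with a Hadamard finite part of $\int_\Omega|\nabla U^n|^2$ carrying an explicit correction $\tfrac{1}{n^2}\sum_i\pi\log d(a_i)$; it derives the exact algebraic identity \eqref{M5}, in which the diagonal divergences cancel completely and the only singular object left is the \emph{off-diagonal} logarithmic interaction $-\pi\int\log|x-y|\,\de\mu^n\boxtimes\mu^n$ plus boundary pairings on $\partial\Omega$ and $\partial\B$. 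In your formulation you must show that $\tfrac{1}{n^2}\sum_i\log d(a_i)\to 0$ for \emph{every} energy-bounded sequence, and your justification (``the energy bound forces macroscopic dispersion'') is not substantiated: the energy bound controls pairwise distances and $H^1$ norms, not distances to $\partial\Omega$, and the physically relevant minimizers do accumulate on $\partial\Omega$ (Corollary~\ref{L3}). Likewise, your lower-semicontinuity step ``restrict to $\Omega\setminus\bigcup_iB_\eta(a_i^n)$ and let $\eta\to0$'' fails as stated: for fixed $\eta$ you are removing $n$ balls whose total area $n\pi\eta^2$ eventually exceeds $|\Omega|$, so nothing is left to pass to the limit on. The paper instead gets lower semicontinuity termwise in \eqref{M5}, using the equicoercivity of Lemma~\ref{lemma-equic}, the non-atomicity of $\mu$ (Lemma~\ref{fatto2}), and the identity \eqref{T18} converting $-\pi\int\log|x-y|\,\de\mu\otimes\mu$ back into $\tfrac12\int_B|\nabla\phi|^2$ minus a boundary term.

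Two further points are essential and missing. First, when $\mu$ charges $\partial\Omega$ the limiting Neumann datum in \eqref{T2} is $f-2\pi\mu\res\partial\Omega$, not $f$; naively ``passing to the limit in the PDE for $U^n$'' (where $\partial_\nu U^n=\tfrac1nf^n$) gives the wrong boundary condition. The resolution requires distinguishing the interior and exterior normal traces of the limiting logarithmic potential (Remark~\ref{allafinelocitiamo} and \eqref{tracciaest2}), which is precisely why the box $\B$ and the annulus $\B\setminus\overline\Omega$ are introduced; your argument never sees this. Second, ``$\mu\in H^{-1}(B)$ as a by-product'' is not automatic from $W^{1,p}$ bounds with $p<2$: it requires showing that finiteness of $-\int\log|x-y|\,\de\mu\otimes\mu$ forces $\nabla\phi\in L^2(\B;\R^2)$, which is the content of Lemmas~\ref{fatto1} and~\ref{claim} (the pointwise comparison of $\int_\B|x-y|^{-1}|x-z|^{-1}\,\de x$ with $\int_\B\frac{x-y}{|x-y|^2}\cdot\frac{x-z}{|x-z|^2}\,\de x$ plus a Fubini argument). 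Finally, in the limsup your one-step quantization, placing points at distance $n^{-1/2}$ from $\partial\Omega$, only yields $W^{1,p}$ convergence of $U^n$ for $p<2$, which does not imply convergence of the quadratic energy; the paper avoids this by a two-step density-in-energy argument, first treating piecewise-constant measures with compact support in $\Omega$ and then showing these approximate any $\mu\in H^{-1}(B)$ with convergence of $\cF^\infty$.
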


\begin{remark}\label{labox}
We notice that the energy $\mathcal F^\infty$ defined in \eqref{T1} is independent of the set $B$. This is trivial, since the support of $\mu$ is contained in $\overline\Omega$.
Therefore, in the sequel we choose a particular regular bounded domain $B$ such that $\overline\Omega\subset B$ and we call it \emph{box}.
\end{remark}

\begin{remark}\label{allafinelocitiamo}
Notice that problem \eqref{T2} is well defined in $H^1(\Omega)$, since the boundary datum $f -2\pi\mu\res\partial \Om$ belongs to $H^{-1/2}(\partial \Om)$. This fact follows from the assumption on $f$ and by the fact that $\mu\res\partial \Om$ is the jump of the normal derivative across $\partial \Omega$ of an $H^1$ function. More precisely,  
\begin{equation}\label{ferragosto}
2\pi \mu\res\partial \Om=\partial_\nu\phi_+-\partial_\nu\phi_-,
\end{equation}
with 
\begin{equation}\label{lorecolliamo}
\phi(x) \coloneqq \int_{\R^2}\log|x-y|\,\de\mu(y).
\end{equation}
Here and henceforth $\partial_\nu\phi_+$ and $\partial_\nu\phi_-$ denote the normal traces of $\nabla \phi$ from the \emph{exterior} and from the \emph{interior} of $\Omega$, respectively, and are defined in the natural way by duality: for any $\psi \in H^1(B)$, we set
\begin{subequations}
\begin{align}
& \langle \partial_\nu \phi_+,\psi \rangle_{\partial \Om}\coloneqq -\int_{\B \setminus \overline \Omega} \nabla \phi \cdot \nabla \psi\, \de x+\int_{\partial \B}\psi\, \partial_\nu \phi  \,\de\cH^1,\label{tracciaesterna}
\\
&  \langle \partial_\nu \phi_-, \psi \rangle_{\partial \Om}\coloneqq 2\pi\int_\Omega \psi \, \de \mu + \int_\Omega \nabla \phi \cdot \nabla \psi\, \de x ,\label{tracciainterna}
\end{align}
\end{subequations}
where we have taken into account that $\Delta \phi=2\pi \mu$ and that $\mu$ has support in $\overline \Omega$.
Notice that in the right-hand side of \eqref{tracciaesterna} we have replaced the pairing $\langle \partial_\nu \phi,\psi \rangle_{\partial B}$ by the integral over $\partial B$, since $\phi$ is regular on $\partial B$. 
Clearly, if $\phi$ were regular also on $\partial \Omega$, then both traces would coincide with the standard normal derivative.
Taking the difference of \eqref{tracciaesterna} and \eqref{tracciainterna}, by using the divergence theorem and the fact that $\mu$ is concentrated in $\overline \Omega$, we obtain
\begin{align*}
\langle  \partial_\nu \phi_+ - \partial_\nu \phi_-, \psi \rangle_{\partial \Om}& = 
-\int_{\B} \nabla \phi \cdot \nabla \psi\, \de x
 +\int_{\partial \B}\psi \, \partial_\nu \phi  \,\de\cH^1 -2\pi \int_\Omega \psi \, \de \mu
 \\ & =2\pi \int_B \psi \, \de \mu -2\pi \int_\Omega \psi \, \de \mu = 2\pi\langle \mu\res\partial \Om, \psi\rangle_{\partial \Omega}\,,
\end{align*} 
which gives \eqref{ferragosto}, by the arbitrariness of $\psi$.
\end{remark}

As a consequence of Theorem \ref{L1} (see \cite{dalmaso}), since $\cP(\overline\Omega)$ is compact with respect to the weak-* topology, we deduce the following result.
\begin{corollary}\label{L2}
Let $n\in \mathbb N$ and $f^n\in H^{-1/2}(\partial \Om)$ with $\langle f^n,1\rangle_{\partial\Omega}=2\pi n$. Assume that $\tfrac1n f^n\to f$ strongly in $H^{-1/2}(\partial \Omega)$ as $n\to\infty$. If $(a_1,\dots, a_n)$ is a minimizer of \eqref{probmin}, the corresponding empirical measures $\mu^n$ defined by \eqref{average} weak-* converge to $\mu^\infty$, where $\mu^\infty\in\cP(\overline\Omega)$ is the unique minimizer of the functional $\cF^\infty$ defined in \eqref{T1}. Moreover, $\cF^n(\mu^n)\to\cF^\infty(\mu^\infty)$, as $n\to\infty$.
\end{corollary}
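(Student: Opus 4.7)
The plan is to combine the $\Gamma$-convergence of Theorem~\ref{L1} with the weak-* compactness of $\cP(\overline\Omega)$, and then to upgrade subsequential convergence to convergence of the whole sequence via uniqueness of the minimizer of $\cF^\infty$.

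Since $\overline\Omega$ is compact, $\cP(\overline\Omega)$ is sequentially weak-* compact by Prokhorov's theorem, so any sequence $(\mu^n)$ of minimizers of $\cF^n$ admits weak-* cluster points. This equi-coercivity, combined with Theorem~\ref{L1}, allows us to invoke the fundamental theorem of $\Gamma$-convergence (see~\cite{dalmaso}): every weak-* cluster point $\mu^\infty$ of $(\mu^n)$ minimizes $\cF^\infty$ and $\cF^n(\mu^n)\to\cF^\infty(\mu^\infty)$.

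To promote subsequential convergence to convergence of the whole sequence, I would prove that the minimizer of $\cF^\infty$ is unique, by strict convexity. The map $\mu\mapsto U$ defined via~\eqref{T2} is affine on $\cP(\overline\Omega)$: this follows from linearity of the mixed boundary-value problem, once one observes that the compatibility condition $\langle f,1\rangle_{\partial\Omega}=2\pi=2\pi\mu(\overline\Omega)$ holds uniformly on $\cP(\overline\Omega)$. Suppose now that $\mu_1,\mu_2\in\cP(\overline\Omega)\cap H^{-1}(B)$ are two distinct minimizers, with associated solutions $U_1,U_2$ to~\eqref{T2}. If $\nabla U_1\equiv \nabla U_2$, then $U_1-U_2$ is constant; but then the interior PDE forces $\mu_1|_\Omega=\mu_2|_\Omega$ and the boundary condition forces $\mu_1\res\partial \Omega=\mu_2\res\partial\Omega$, giving $\mu_1=\mu_2$, a contradiction. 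Hence $\nabla U_1\not\equiv \nabla U_2$. For $t\in(0,1)$, the convex combination $\mu_t=(1-t)\mu_1+t\mu_2$ lies in $\cP(\overline\Omega)\cap H^{-1}(B)$ and the associated solution is $U_t=(1-t)U_1+tU_2$; strict convexity of the Dirichlet integral modulo additive constants then yields
$$
\cF^\infty(\mu_t)<(1-t)\cF^\infty(\mu_1)+t\cF^\infty(\mu_2)=\min_{\cP(\overline\Omega)}\cF^\infty,
$$
contradicting minimality. Thus the minimizer of $\cF^\infty$ is unique, and the entire sequence $(\mu^n)$ weak-* converges to it.

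The main obstacle is the uniqueness argument just sketched; beyond it, the statement is a direct application of the standard $\Gamma$-convergence machinery. The delicate ingredients are the affine dependence of $U$ on $\mu$ (for which the compatibility of the Neumann-type datum in~\eqref{T2} across the affine class $\cP(\overline\Omega)$ is essential) and the injectivity of the map $\mu\mapsto\nabla U$, which is obtained by simultaneously reading the interior PDE and the boundary condition in~\eqref{T2}.
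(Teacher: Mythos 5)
Your proposal is correct and follows essentially the same route as the paper, which simply invokes the fundamental theorem of $\Gamma$-convergence (\cite[Corollary~7.20]{dalmaso}) together with the strict convexity of $\cF^\infty$ to get uniqueness of the minimizer and hence full-sequence convergence. The only difference is that you spell out the strict-convexity argument (affine dependence of $\nabla U$ on $\mu$, injectivity of $\mu\mapsto\nabla U$, strict convexity of the Dirichlet integral), which the paper leaves implicit; these details are sound.
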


For particular choices of $f$ we are able to characterize explicitly the measure $\mu^\infty$ and then to derive information on minima and minimizers of \eqref{probmin}.
\begin{corollary}\label{L3}
Let $n\in \mathbb N$ and $f^n\in H^{-1/2}(\partial \Om)$ with $\langle f^n,1\rangle_{\partial\Omega}=2\pi n$. Assume that $\tfrac1n f^n\to f$ strongly in $H^{-1/2}(\partial \Omega)$ as $n\to\infty$. If $f\in L^1(\partial \Omega)$ and $f\geq 0$ then if $(a_1,\dots, a_n)$ is a minimizer of \eqref{probmin}, the corresponding empirical measures $\mu^n$ defined by \eqref{average} weak-* converge to the measure $\mu^\infty$, which is absolutely continuous with respect to $\cH^1$, and is defined by 
\begin{equation}\label{limite}
\mu^\infty=\frac{f}{2\pi}\,\cH^1\res\partial \Omega.
\end{equation}
Moreover, the energy vanishes in the limit, i.e.,
\begin{equation}\label{vanishinglimit}
\lim_{n\to\infty}\cF^n(\mu^n)=0.
\end{equation}
\end{corollary}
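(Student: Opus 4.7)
The plan is to invoke Corollary \ref{L2}: since the minimizer of $\cF^\infty$ is unique, it suffices to exhibit a probability measure $\bar\mu\in\cP(\overline\Omega)$ with $\cF^\infty(\bar\mu)=0$ and check that it coincides with the right-hand side of \eqref{limite}. The natural candidate is
\begin{equation*}
\bar\mu:=\frac{f}{2\pi}\cH^1\res\partial\Omega.
\end{equation*}

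First I would verify that $\bar\mu$ is admissible. Since $f\geq 0$ and $f\in L^1(\partial\Omega)$, the measure $\bar\mu$ is non-negative, and its total mass equals $\tfrac{1}{2\pi}\int_{\partial\Omega}f\,\de\cH^1=\tfrac{1}{2\pi}\langle f,1\rangle_{\partial\Omega}=1$, where the last equality is obtained by passing to the limit in $\langle\tfrac{1}{n}f^n,1\rangle_{\partial\Omega}=2\pi$ via the strong convergence in $H^{-1/2}(\partial\Omega)$. The regularity $\bar\mu\in H^{-1}(B)$ then follows from the continuity of the trace $H^1_0(B)\to H^{1/2}(\partial\Omega)$ combined with $f\in H^{-1/2}(\partial\Omega)$: indeed, for any $\psi\in H^1_0(B)$, the pairing $\langle\bar\mu,\psi\rangle=\tfrac{1}{2\pi}\langle f,\psi|_{\partial\Omega}\rangle_{\partial\Omega}$ is bounded by a constant times $\|\psi\|_{H^1(B)}$.

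Next I would compute $\cF^\infty(\bar\mu)$ by inspecting the PDE \eqref{T2} with $\mu=\bar\mu$. Since $\bar\mu$ is supported on $\partial\Omega$, its restriction to the open set $\Omega$ vanishes as a distribution, so the interior equation reduces to $\Delta U=0$; moreover, $2\pi\bar\mu\res\partial\Omega=f$ in $H^{-1/2}(\partial\Omega)$, so the boundary condition becomes $\partial_\nu U=0$. Hence $U$ solves the homogeneous Neumann problem for the Laplacian, is therefore constant, and $\nabla U\equiv 0$, which gives $\cF^\infty(\bar\mu)=0$. Since $\cF^\infty\geq 0$, $\bar\mu$ is a minimizer, and by the uniqueness stated in Corollary \ref{L2} it coincides with the weak-$*$ limit $\mu^\infty$; this proves \eqref{limite}. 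Then \eqref{vanishinglimit} follows from the convergence $\cF^n(\mu^n)\to\cF^\infty(\mu^\infty)=0$ also supplied by Corollary \ref{L2}.

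The main point requiring some care is the admissibility $\bar\mu\in H^{-1}(B)$ together with the identification $2\pi\bar\mu\res\partial\Omega=f$ in $H^{-1/2}(\partial\Omega)$; both rely on simultaneously exploiting $f\in L^1(\partial\Omega)$ (to make sense of $\bar\mu$ as a measure) and $f\in H^{-1/2}(\partial\Omega)$ (to handle the duality pairings with $H^1$ test functions). The remaining steps are direct verifications.
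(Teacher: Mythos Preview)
Your proposal is correct and follows essentially the same route as the paper's proof: both argue that it suffices, by the uniqueness in Corollary~\ref{L2} and the nonnegativity of $\cF^\infty$, to verify that the candidate $\bar\mu=\tfrac{f}{2\pi}\cH^1\res\partial\Omega$ satisfies $\cF^\infty(\bar\mu)=0$, and then read off \eqref{vanishinglimit} from Corollary~\ref{L2}. The only cosmetic difference is in how the verification $\cF^\infty(\bar\mu)=0$ is phrased: the paper alludes to integrating by parts in \eqref{T1}\textendash\eqref{T2} (which, for this $\bar\mu$, makes the two terms in the weak formulation cancel), whereas you observe directly that the PDE \eqref{T2} reduces to the homogeneous Neumann problem and hence $\nabla U\equiv 0$; these are two ways of reading the same computation.
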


A related result to Theorem~\ref{L1} can be found in the framework of Ginzburg-Landau theory \cite{SS00}, where the authors treated only the case $f^n= n f$, for a \emph{fixed continuous} function $f$, independent of $n$. 
We point out that the interest in studying more general sequences $f^n$ of \emph{non constant} and \emph{non regular} boundary data has been raised by Sandier and Soret in \cite[Open Problem 1]{SS2000}. In light of these similarities, our results can be regarded as generalizations of those contained in the papers \cite{SS2000,SS00}. We underline that we also weaken the assumptions on the regularity of the domain $\Omega$ and we do not require simple connectedness. 
For these reasons, the proof strategy of \cite{SS2000,SS00}, based on the regularity of the domain and the boundary datum, does not seem easily adaptable to the present case. In this respect, the introduction of the box $B$ will be crucial in the proof of the $\Gamma$-convergence result to recast the renormalized energy \eqref{T9} and pass to the limit in the annular region $\B\setminus \overline \Omega$.

The starting point of our analysis is the rewriting of the energy $\mathcal F^n$ in Section~\ref{sec.2}. Then, in Section~\ref{sec.3} we prove some auxiliary lemmas for the proof of Theorem~\ref{L1}, which is finally addressed in Section~\ref{sec.4}, togheter with the proofs of Corollaries~\ref{L2} and \ref{L3}.

\subsection*{Notation}
The symbol $\cH^1$ denotes the $1$-dimensional Hausdorff measure, while $\sharp$ the counting measure.
Given an open set with Lipschitz boundary, we denote by $\nu$ the outer unit normal vector to the boundary, defined almost everywhere on it. We always use the symbols $\sum_{i\neq j}$ and $\sum_{i<j}$ to denote the summation over all indices $i,j$ with $i\neq j$ and $i<j$, respectively, ranging from $1$ to some $n\in \mathbb N$ whose value is clear from the context. 
Recalling Remark~\ref{labox}, we fix once and for all a box $B$ containing $\overline \Omega$ and we denote by $\triangle_0=\{(y,z)\in\B\times\B:y=z\}$ its diagonal. 
Given $x\in \mathbb R^2$ and $r\in \mathbb R^+$, we denote by $B_r(x)$ the open disk centered at $x$ with radius $r$, and by $\overline{B}_r(x)$ its closure.
The duality product between $H^{-1/2}(\partial \Om)$ and $H^{1/2}(\partial \Om)$ is denoted by $\langle\cdot,\cdot\rangle_{\partial \Om}$.
Given $A\subset \mathbb R^2$, we denote by $\1_A$ its characteristic function, namely $\1_A(x)=1$ if $x\in A$ and $\1_A(x)=0$ if $x\notin A$.

\section{The renormalized energy}\label{sec.2}

In this section we rewrite the energy functional $\mathcal F^n$ introduced in \eqref{T9} in a more convenient way. Let $\mu^n\in \mathcal P(\overline\Omega)$ be the empirical measure associated to an $n$-tuple $(a_1,\dots, a_n)$ of distinct points $a_i \in \Omega$ according to \eqref{average}. We define
\begin{equation}\label{M3}
\phi^n \coloneqq \frac1n\sum_{i=1}^n \phi_i\qquad\text{and}\qquad w^n \coloneqq \frac1n\sum_{i=1}^n w_i.
\end{equation}
In view of \eqref{M2}, these functions solve, respectively, 
\begin{equation}\label{M4}
\begin{cases}
\Delta \phi^n=2\pi\mu^n & \text{in $\Omega$,} \\
\phi^n(x)=\frac1n\sum_{i=1}^n\log|x-a_i| & \text{on $\partial\Omega$,}
\end{cases}
\qquad\text{and}\qquad
\begin{cases}
\Delta w^n=0 & \text{in $\Omega$,} \\
\partial_\nu w^n(x)= \tfrac 1n f^n-\partial_\nu\phi^n & \text{on $\partial\Omega$.}
\end{cases}
\end{equation}
Notice that $\phi^n$ belongs to $W^{1,p}(B)$ for every $1\leq p<2$ (but not to $H^1(B))$ and reads
$$
\phi^n(x)=\int_\Omega \log|x-y| \,\de\mu^n(y).
$$ 
Moreover, since all the $a_i$'s are inside $\Omega$, the \emph{exterior} and \emph{interior} normal traces of $\phi^n$ coincide, namely $\partial_\nu\phi^n_+=\partial_\nu\phi^n_-$ (and the same holds for all the $\phi_i$'s as well, see Remark~\ref{allafinelocitiamo}).
As for $w^n$, since every $w_i$ has zero average, it is uniquely determined and has zero average in $\Omega$. 
Performing an integration by parts in \eqref{recast} and exploiting \eqref{M2}, we get
\begin{align*}
\mathcal E_\text{self}(a_i)& = \pi \log d(a_i) + \frac12 \int_{\Omega\setminus \overline{B}_{d(a_i)}(a_i)}\big(|\nabla \phi_i|^2 + 2 \nabla \phi_i \cdot \nabla w_i\big)\, \de x + \frac12 \int_\Omega |\nabla w_i|^2\, \de x
\\
& =-  \frac12 \langle \partial_\nu\phi_i,\phi_i\rangle_{\partial \Omega} + \frac 1n \langle  f^n, \phi_i \rangle_{\partial \Omega} + \frac12  \int_\Omega |\nabla w_i|^2\, \de x,
\end{align*}
so that the contribution of the self energy in \eqref{T9} can be written as
\begin{equation}\label{M11}
\frac1{n^2} \sum_{i=1}^n \mathcal E_\text{self}(a_i) = -\frac1{2n^2}\sum_{i=1}^n \langle \partial_\nu\phi_i,\phi_i\rangle_{\partial \Omega} +\frac{1}{n^2}\langle {f^n}, \phi^n  \rangle_{\partial\Omega}  +\frac1{2n^2} \sum_{i=1}^n \int_\Omega |\nabla w_i|^2\, \de x.
\end{equation}
The terms in the interaction energy $\mathcal E_\text{int}(a_i,a_j)$, after expanding the power and integrating by parts, read
\begin{subequations}\label{T3}
\begin{align}
\frac1{2n^2} \sum_{i\neq j}\int_\Omega \nabla\phi_i\cdot\nabla\phi_j \, \de x={} & \frac1{2n^2}\sum_{i\neq j} \langle \partial_\nu\phi_j,\phi_i\rangle_{\partial \Omega}-\pi\int_{\Omega\times\Omega} \log|x-y|\,\de\mu^n\boxtimes\mu^n, \label{M6} \\
\frac1{2n^2} \sum_{i\neq j}\int_\Omega \nabla\phi_i\cdot\nabla w_j \, \de x={} & \frac1{2n^2}\sum_{i\neq j} \langle \tfrac 1n {f^n}-\partial_\nu \phi_j,  \phi_i\rangle_{\partial\Omega}, \label{M7} \\
\frac1{2n^2} \sum_{i\neq j}\int_\Omega \nabla\phi_j\cdot\nabla w_i \, \de x={} & \frac1{2n^2}\sum_{i\neq j} \langle \tfrac 1n {f^n}-\partial_\nu \phi_i,  \phi_j\rangle_{\partial\Omega}, \label{M8} \\
\frac1{2n^2} \sum_{i\neq j}\int_\Omega \nabla w_i\cdot\nabla w_j \, \de x={} & \frac1{2}\int_{\Omega} |\nabla w^n|^2\de x-\frac1{2n^2}\sum_{i=1}^n \int_\Omega |\nabla w_i|^2\de x, \label{M9}
\end{align}
\end{subequations}
where we set
\begin{equation}\label{box}
\mu^n\boxtimes\mu^n \coloneqq \frac1{n^2}\sum_{i\neq j}\delta_{(a_i,a_j)}.
\end{equation}
By grouping \eqref{M6}, \eqref{M7}, and \eqref{M8} together, recalling \eqref{M4} and \eqref{box}, we obtain the contribution
\[
\begin{split}
&\frac1{2n^2} \sum_{i\neq j}\left(\int_\Omega \nabla\phi_i\cdot\nabla\phi_j \, \de x +\int_\Omega \nabla\phi_i\cdot\nabla w_j \, \de x+\int_\Omega \nabla\phi_j\cdot\nabla w_i \, \de x\right)\\
&=\frac{n-1}{n^2}\langle f^n,\phi^n\rangle_{\partial \Omega} -\frac12\langle \partial_\nu\phi^n,\phi^n\rangle_{\partial \Omega}+\frac1{2n^2} \sum_{i=1}^n\langle \partial_\nu\phi_i,\phi_i\rangle_{\partial \Omega} -\pi\int_{\Omega\times\Omega} \log|x-y|\,\de\mu^n\boxtimes\mu^n,
\end{split}
\]
which combined with \eqref{M9} yields
\[
\begin{split}
\frac{1}{n^2}\sum_{i\neq j} \mathcal E_\text{int}(a_i,a_j)={}&\frac{n-1}{n^2}\langle  {f^n},\phi^n\rangle_{\partial \Omega} -\frac12\langle \partial_\nu\phi^n,\phi^n\rangle_{\partial \Omega}+\frac1{2n^2} \sum_{i=1}^n \langle \partial_\nu\phi_i,\phi_i\rangle_{\partial \Omega}\\
&
-\pi\int_{\Omega\times\Omega} \log|x-y|\,\de\mu^n\boxtimes\mu^n+\frac1{2}\int_{\Omega} |\nabla w^n|^2\de x-\frac1{2n^2}\sum_{i=1}^n \int_\Omega |\nabla w_i|^2\de x.
\end{split}
\]
Plugging this new expression together with \eqref{M11} into \eqref{T9}  allows to rewrite the renormalized energy functional as 
\begin{equation}\label{M5}
\cF^n(\mu^n) \!= \! \frac1{2}\int_{\Omega} |\nabla w^n|^2\,\de x 
+\frac 1n \langle  {f^n},\phi^n\rangle_{\partial \Omega}
- \frac12 \langle \partial_\nu\phi^n,\phi^n\rangle_{\partial \Omega}
-\pi\int_{\Omega\times\Omega} \log|x-y|\,\de\mu^n\boxtimes\mu^n,
\end{equation}
{which will be fundamental in the sequel.}

\section{Auxiliary lemmas}\label{sec.3}
In the following, we prove some auxiliary lemmas which will play a crucial role in the proof of the $\Gamma$-liminf inequality for Theorem \ref{L1}. The first one concerns an equi-coercivity property of the functionals $\cF^n$.

\begin{lemma}\label{lemma-equic}
For every $n\in\mathbb N$, let $\mu^n$ be as in \eqref{average}, let $f^n\in H^{-1/2}(\partial\Om)$ with $\langle f^n,1\rangle_{\partial \Om}=2\pi n$, and assume $\frac{1}{n} f^n$ be uniformly bounded in $H^{-1/2}(\partial\Om)$. Let $\phi^n$ and $w^n$ be as in \eqref{M3}.
Then there exist two constants $C_1,C_2>0$ independent of $n$ such that
\begin{equation}\label{equic}
\cF^n(\mu^n) \geq C_1\left( \norma{w^n}_{H^1(\Omega)}^2+\norma{\phi^n}_{H^1(\B\setminus\overline\Omega)}^2 \right)-\pi\int_{\Omega\times \Omega} \log|x-y|\,\de\mu^n\boxtimes\mu^n (x,y)
-C_2.
\end{equation}
\end{lemma}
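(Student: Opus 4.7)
The plan is to exploit the rewriting \eqref{M5} of $\cF^n$ from Section~\ref{sec.2}: the logarithmic interaction term appears identically on both sides of \eqref{equic}, so the task reduces to controlling the remaining three terms
$$\frac12\int_\Omega|\nabla w^n|^2\,\de x+\frac1n\langle f^n,\phi^n\rangle_{\partial\Omega}-\frac12\langle\partial_\nu\phi^n,\phi^n\rangle_{\partial\Omega}$$
from below by $C_1(\|w^n\|_{H^1(\Omega)}^2+\|\phi^n\|_{H^1(B\setminus\overline\Omega)}^2)-C_2$.

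First, I would turn the boundary pairing on $\partial\Omega$ into a bulk Dirichlet integral on the annular region $B\setminus\overline\Omega$. Since $\mu^n$ is supported inside $\Omega$, the potential $\phi^n$ is harmonic on $B\setminus\overline\Omega$ (indeed smooth there), so integration by parts on this domain yields
$$-\langle\partial_\nu\phi^n,\phi^n\rangle_{\partial\Omega}=\int_{B\setminus\overline\Omega}|\nabla\phi^n|^2\,\de x-\int_{\partial B}\phi^n\,\partial_\nu\phi^n\,\de\cH^1,$$
the sign of the first term on the right being correct because the outer normal to $B\setminus\overline\Omega$ on $\partial\Omega$ points \emph{into} $\Omega$. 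For the boundary integral on $\partial B$, the key point is that $\dist(\partial B,\overline\Omega)>0$, so the explicit formula $\phi^n(x)=\int_{\overline\Omega}\log|x-y|\,\de\mu^n(y)$ and its gradient are bounded on $\partial B$ by constants depending only on $\dist(\partial B,\overline\Omega)$ and $\diam B$, not on $n$. Hence $\bigl|\int_{\partial B}\phi^n\,\partial_\nu\phi^n\,\de\cH^1\bigr|\le C$ uniformly in $n$.

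Next, I would upgrade the gradient norms to full $H^1$ norms. For $w^n$, the zero-average condition on $\Omega$ gives a Poincaré inequality, so $\tfrac12\int_\Omega|\nabla w^n|^2\,\de x$ already controls $\|w^n\|_{H^1(\Omega)}^2$ up to a constant. For $\phi^n$ on $B\setminus\overline\Omega$, a Poincaré inequality with boundary control (using that $\phi^n$ is uniformly bounded in $L^2(\partial B)$ by the pointwise bounds above) yields $\|\phi^n\|_{H^1(B\setminus\overline\Omega)}^2\le C(\|\nabla\phi^n\|_{L^2(B\setminus\overline\Omega)}^2+1)$. Rearranging gives a lower bound $\tfrac12\int_{B\setminus\overline\Omega}|\nabla\phi^n|^2\,\de x\ge C_1\|\phi^n\|_{H^1(B\setminus\overline\Omega)}^2-C_2$.

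Finally, for the forcing term, the hypothesis $\|\tfrac1n f^n\|_{H^{-1/2}(\partial\Omega)}\le C$ combined with the continuous trace embedding $H^1(B\setminus\overline\Omega)\hookrightarrow H^{1/2}(\partial\Omega)$ yields
$$\Bigl|\frac1n\langle f^n,\phi^n\rangle_{\partial\Omega}\Bigr|\le C\|\phi^n\|_{H^{1/2}(\partial\Omega)}\le C'\|\phi^n\|_{H^1(B\setminus\overline\Omega)},$$
and Young's inequality absorbs this linear bound into the $H^1$ term established above, at the cost of enlarging $C_2$ and shrinking $C_1$ slightly; I just need $\delta>0$ small enough to keep a strictly positive coefficient. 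The main technical subtlety, rather than any single hard step, is coordinating the two facts that $\phi^n$ only lies in $W^{1,p}(B)$ for $p<2$ (so one cannot integrate by parts on all of $B$) but does lie in $H^1(B\setminus\overline\Omega)$ because all singularities $a_i$ sit strictly inside $\Omega$; the harmonicity on the annulus is what lets everything proceed, and the introduction of the box $B$ plays exactly this role.
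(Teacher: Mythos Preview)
Your proposal is correct and follows essentially the same route as the paper: start from \eqref{M5}, use the divergence theorem on $B\setminus\overline\Omega$ to produce $\int_{B\setminus\overline\Omega}|\nabla\phi^n|^2$, bound the $\partial B$ contribution by the uniform $C^\infty$ bounds coming from $\dist(\partial B,\overline\Omega)>0$, control $w^n$ via Poincar\'e--Wirtinger, and absorb the forcing term via the trace theorem and Young's inequality. The only cosmetic difference is that the paper obtains the full $H^1(B\setminus\overline\Omega)$ norm of $\phi^n$ by directly estimating $\|\phi^n\|_{L^2(B\setminus\overline\Omega)}$ (using $|\tfrac1n\sum_i\phi_i|^2\le\tfrac{2n-1}{n^2}\sum_i\phi_i^2$ and the uniform bound on $\|\log|\cdot-a_i|\,\|_{L^2(B)}$), whereas you invoke a Poincar\'e inequality with boundary control on $\partial B$; both are straightforward and yield the same conclusion.
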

{
\begin{proof} By hypothesis, $\cF^n(\mu^n)$ is of the form \eqref{M5}. We have assumed that each $w_i$ in \eqref{M2}, and hence $w^n$ in \eqref{M3}, have zero average in $\Omega$, thus, by the Poincar\'e-Wirtinger inequality, the first term in the right-hand side of \eqref{M5} is equivalent to the $H^1$ norm of $w^n$. 
As for $\phi^n$, we note that $\|\phi^n\|^2_{L^2(B\setminus \overline \Omega)}$ is uniformly bounded:
\begin{equation*}
\begin{split}
\|\phi^n\|^2_{L^2(B\setminus \overline \Omega)} & = \int_{B\setminus \overline\Omega} \bigg|\frac 1n \sum_{i=1}^n \phi_i\bigg|^2 \de x = \frac{1}{n^2} \int_{B\setminus \overline\Omega} \bigg( \sum_{i=1}^n |\phi_i|^2 + 2\sum_{i\neq j} \phi_i \phi_j \bigg) \, \de x
\\
& 
\leq \frac{2n + 1}{n^2} \sum_{i=1}^n \int_{B \setminus \overline \Omega} \|\phi_i\|^2_{L^2(B)}\,\de x \leq C,
\end{split}
\end{equation*}
for some positive constant $C$ independent of $n$. In particular, we get
\begin{equation}\label{17}
\norma{\nabla \phi^n}_{L^2(\B\setminus\overline\Omega ;\mathbb R^2 )}^2 \geq  \norma{ \phi^n}_{H^1(\B\setminus\overline\Omega)}^2- C.
\end{equation}
By using the divergence theorem in $\B\setminus\overline\Omega$, we can write the third term in the right-hand side of \eqref{M5} as
\begin{equation}\label{T10}
- \langle \partial_\nu\phi^n,\phi^n\rangle_{\partial \Omega}=\int_{\B\setminus\overline\Omega} |\nabla \phi^n|^2\,\de x- \int_{\partial\B} \phi^n \partial_\nu\phi^n\,\de\cH^1.
\end{equation}
Here the regularity of $\phi^n$ and $\partial_\nu \phi^n$ on $\partial B$ (in particular the fact that $\partial B$ is disjoint from $\overline \Omega$, where the measure $\mu^n$ concentrates) allows us to replace the duality brackets $\langle \partial_\nu \phi^n, \phi^n \rangle_{\partial B}$ with an integral over $\partial B$.
By combining \eqref{T10}, \eqref{17}, and the assumption on the uniform boundedness of $\frac 1n f^{n}$, we obtain
\begin{align}
\frac 1n \langle  f^n,\phi^n\rangle_{\partial \Om}- \frac12\langle \partial_\nu\phi^n,\phi^n\rangle_{\partial \Omega} & \geq {\frac12} \norma{ \phi^n}_{H^1(\B\setminus\overline\Omega)}^2 -C\norma{\tfrac 1n f^n}_{H^{-1/2}(\partial\Omega)}\norma{\phi^n}_{H^1(\B\setminus\overline\Omega)} -C_2 \notag
\\ & 
\geq C_1  \norma{ \phi^n}_{H^1(\B\setminus\overline\Omega)}^2 - C_2, \label{T11}
\end{align}
where $C,C_1,C_2>0$ are three constants independent of $n$, which may vary from line to line. In the first inequality of \eqref{T11} we have used the trace theorem in $\B\setminus\overline\Omega$ and the fact that $\phi^n$ and $\partial_\nu\phi^n$ are uniformly bounded in $C^\infty(\partial\B)$; while in the second inequality we have used the Young's inequality $\norma{\tfrac 1n f^n}_{H^{-1/2}(\partial\Omega)}\norma{\phi^n}_{H^1(\B\setminus\overline\Omega)} \leq \tfrac{1}{2c} \norma{\tfrac 1n f^n}_{H^{-1/2}(\partial\Omega)}^2 + \frac{c}{2}\norma{\phi^n}_{H^1(\B\setminus\overline\Omega)}^2$ with a suitable choice of $c>0$, small enough.
The lemma is proved, possibly by changing the constants $C_1$ and $C_2$.
\end{proof}
}
As noticed in  \cite[Lemma 1]{GPPS2013} omitting the diagonal in the definition \eqref{box} of the measures $\mu^n\boxtimes\mu^n$
does not change their limiting behavior.

\begin{lemma}\label{nodiagonale}
Let $\mu^n,\mu\in\cP(\overline\Omega)$ be such that $\mu^n$ is of the form \eqref{average} for all $n\in\mathbb N$ and $\mu^n\wsto\mu$ as $n\to\infty$.
Then, $\mu^n\boxtimes\mu^n\wsto\mu\otimes\mu$.
\end{lemma}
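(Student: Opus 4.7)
The plan is to compare $\mu^n\boxtimes\mu^n$ with the full product measure $\mu^n\otimes\mu^n=\tfrac1{n^2}\sum_{i,j}\delta_{(a_i,a_j)}$ and show that the diagonal correction is negligible in the limit.

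First I would prove the auxiliary statement that $\mu^n\otimes\mu^n\wsto\mu\otimes\mu$ in $\mathcal{P}(\overline\Omega\times\overline\Omega)$. Since $\{\mu^n\otimes\mu^n\}$ is a sequence of probability measures on the compact set $\overline\Omega\times\overline\Omega$, and hence has uniformly bounded total variation, it suffices by a standard density argument to test weak-* convergence against elementary tensors $g(x,y)=f(x)h(y)$ with $f,h\in C(\overline\Omega)$. For such $g$, Fubini yields
\begin{equation*}
\int_{\overline\Omega\times\overline\Omega} g\,\de(\mu^n\otimes\mu^n)=\bigg(\int_{\overline\Omega} f\,\de\mu^n\bigg)\bigg(\int_{\overline\Omega} h\,\de\mu^n\bigg)\longrightarrow \bigg(\int_{\overline\Omega} f\,\de\mu\bigg)\bigg(\int_{\overline\Omega} h\,\de\mu\bigg)=\int_{\overline\Omega\times\overline\Omega} g\,\de(\mu\otimes\mu),
\end{equation*}
because $\mu^n\wsto\mu$ by hypothesis. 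Since the linear span of such tensor products is uniformly dense in $C(\overline\Omega\times\overline\Omega)$ by the Stone--Weierstrass theorem, and the masses are uniformly bounded, the convergence extends to arbitrary $g\in C(\overline\Omega\times\overline\Omega)$.

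Next I would estimate the diagonal defect. A direct computation gives
\begin{equation*}
\mu^n\otimes\mu^n-\mu^n\boxtimes\mu^n=\frac1{n^2}\sum_{i=1}^n \delta_{(a_i,a_i)},
\end{equation*}
which is a nonnegative measure of total mass $1/n$. Consequently, for any $g\in C(\overline\Omega\times\overline\Omega)$,
\begin{equation*}
\bigg|\int g\,\de(\mu^n\otimes\mu^n-\mu^n\boxtimes\mu^n)\bigg|\leq \frac{\|g\|_{L^\infty}}{n}\longrightarrow 0.
\end{equation*}
Combining this with the first step yields $\mu^n\boxtimes\mu^n\wsto\mu\otimes\mu$, as desired.

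There is no real obstacle here: the statement is essentially the observation that removing $n$ Dirac masses of weight $1/n^2$ from a probability measure is a perturbation of total mass $1/n$. The only slightly delicate point is the weak-* convergence of the product measures, which follows routinely from Stone--Weierstrass plus uniform boundedness of the total variation.
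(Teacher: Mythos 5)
Your proof is correct and takes essentially the same route as the paper's: both decompose the difference through the full product $\mu^n\otimes\mu^n$, bound the diagonal defect $\frac1{n^2}\sum_i\delta_{(a_i,a_i)}$ by $\|\psi\|_\infty/n$, and invoke the convergence $\mu^n\otimes\mu^n\wsto\mu\otimes\mu$. The only difference is that you spell out the Stone--Weierstrass justification of this last convergence, which the paper leaves implicit.
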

\begin{proof}
Let $\psi\in C(\overline \Omega\times \overline \Omega)$ and write
\[
\int_{\overline \Omega\times \overline \Omega} \psi\,\de\mu^n\boxtimes\mu^n -\int_{\overline \Omega\times \overline \Omega} \psi\,\de\mu\otimes\mu=\int_{\overline \Omega\times \overline \Omega} \psi\,(\de\mu^n\boxtimes\mu^n -\de\mu^n\otimes\mu^n)+\int_{\overline \Omega\times \overline \Omega} \psi\,(\de\mu^n\otimes\mu^n-\de\mu\otimes\mu).
\]
Let us study the asymptotics  as $n\to \infty$ of the two terms in the right-hand side: the modulus of the first one is bounded above by $\|\psi\|_\infty/n$, thus it converges to zero; as for the second term, it vanishes thanks to the weak-* convergence of $\mu^n$ to $\mu$. This concludes the proof, by definition of weak-* convergence and by the arbitrariness of the continuous test function $\psi$.
\end{proof}

The previous two lemmas allows to transfer equiboundedness of the functionals $\cF^n$ into information on the measure $\mu\otimes \mu$.

\begin{lemma}\label{fatto2}
Let $\mu^n,\mu\in\cP(\overline\Omega)$ be such that $\mu^n\wsto\mu$ as $n\to\infty$.
Let us assume that $\cF^n(\mu^n)$ is uniformly bounded.
Then the measure $\mu\otimes\mu$ does not charge the diagonal $\triangle_0$. 
In particular, $\mu$ does not charge points.
\end{lemma}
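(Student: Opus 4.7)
The plan is to extract from the uniform bound on $\cF^n(\mu^n)$ a uniform bound on the logarithmic self-interaction, and then pass to the limit through lower semicontinuity.

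First I would apply Lemma~\ref{lemma-equic}: since $\cF^n(\mu^n)$ is uniformly bounded and the $H^1$ norms of $w^n$ and $\phi^n$ appear with a positive sign on the right-hand side of \eqref{equic}, one immediately deduces that
\[
-\pi \int_{\Omega\times\Omega} \log|x-y|\, \de\mu^n\boxtimes\mu^n \leq \cF^n(\mu^n) + C_2 \leq C,
\]
for a constant $C$ independent of $n$. Equivalently, the integral of the nonnegative (up to an additive constant) lower semicontinuous function $g(x,y) \coloneqq -\log|x-y|$ against $\mu^n\boxtimes\mu^n$ is uniformly bounded from above.

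Next I would invoke Lemma~\ref{nodiagonale} to conclude that $\mu^n\boxtimes\mu^n \wsto \mu\otimes\mu$ as $n\to\infty$. Since $\overline\Omega$ is bounded, we have $|x-y|\leq \diam(\Omega)$ on $\overline\Omega\times\overline\Omega$, so $g$ is bounded from below by the real constant $-\log\diam(\Omega)$. The function $g$ is continuous off the diagonal and equals $+\infty$ on $\triangle_0$, hence it is lower semicontinuous on $\overline\Omega\times\overline\Omega$. By the standard lower semicontinuity property for weak-* convergence against lsc functions bounded from below (a Portmanteau-type statement), I obtain
\[
\int_{\overline\Omega\times\overline\Omega} (-\log|x-y|)\, \de\mu\otimes\mu \leq \liminf_{n\to\infty} \int_{\overline\Omega\times\overline\Omega}(-\log|x-y|)\, \de\mu^n\boxtimes\mu^n \leq C < +\infty.
\]

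The conclusion is then immediate: if $(\mu\otimes\mu)(\triangle_0)>0$, the left-hand side would be $+\infty$ because $-\log|x-y| = +\infty$ on $\triangle_0$, contradicting the finiteness just established. Hence $\mu\otimes\mu$ does not charge $\triangle_0$. Finally, if $\mu$ had an atom at some $x_0\in\overline\Omega$ with mass $c>0$, then $(\mu\otimes\mu)(\{(x_0,x_0)\}) = c^2 > 0$ would contradict what we just proved; therefore $\mu$ has no atoms.

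The only subtle point is the lower semicontinuity step: one has to make sure that the appropriate version of the Portmanteau inequality applies to the measures $\mu^n\boxtimes\mu^n$, whose total mass is $1-1/n$ rather than $1$. This is harmless since the mass converges to that of $\mu\otimes\mu$; alternatively, one can truncate $g$ at height $M$, pass to the weak-* limit using the continuous bounded function $\min(g,M)$, and then let $M\to\infty$ via monotone convergence on the limit side.
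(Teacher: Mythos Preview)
Your proof is correct and follows essentially the same structure as the paper's: both start from Lemma~\ref{lemma-equic} to bound the logarithmic term and invoke Lemma~\ref{nodiagonale} for the weak-* convergence of $\mu^n\boxtimes\mu^n$. The only difference is in the final step: the paper restricts to the $\epsilon$-neighborhood $\triangle_\epsilon$, bounds $-\log|x-y|$ from below by $-\log\epsilon$ there, applies the Portmanteau inequality for the open set $\triangle_\epsilon$, and then lets $\epsilon\to0$; you instead apply the Portmanteau/truncation argument directly to the lower semicontinuous function $-\log|x-y|$ on the whole of $\overline\Omega\times\overline\Omega$. Your route has the small advantage of yielding the stronger conclusion $\int_{\overline\Omega\times\overline\Omega}(-\log|x-y|)\,\de\mu\otimes\mu<+\infty$, which the paper only obtains later (in the proof of Proposition~\ref{T30}) by an analogous truncation argument.
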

\begin{proof}
Let $\{a_i^n\}$ be the family of points defining the measure $\mu^n$ in \eqref{average} and consider, for every $\e\in (0,1)$ and $n\in\mathbb N$, the quantity
\begin{equation*}
\begin{split}
N_{n,\e}\coloneqq{} &  \sharp\{(a_i^n,a_j^n)\in\B\times\B : a_i^n\neq a_j^n\;\text{and}\;|a_i^n-a_j^n|<\e\} \\
={} & \sharp\{(i,j)\in\{1,\ldots,n\}^2:i\neq j\;\text{and}\;(a_i^n,a_j^n)\in \triangle_\e\},
\end{split}
\end{equation*}
where $\triangle_\e$ denotes the $\e$-neighborhood of the diagonal of $\B\times\B$, namely the open set $
\triangle_\e\coloneqq\{(y,z)\in\B\times\B:|y-z|< \e\}$. By Lemma~\ref{lemma-equic}, recalling \eqref{box} and using the monotonicity of the logarithm, we have
\begin{equation}\label{aboveinequality}
\cF^n(\mu^n)\geq -\pi\int_{\triangle_\epsilon} \log|x-y|\,\de\mu^n\boxtimes\mu^n-C
\geq-\frac{\pi N_{n,\e}\log\e}{n^2}-C.
\end{equation}
By Lemma~\ref{nodiagonale}, the weak-* convergence $\mu^n\boxtimes\mu^n\wsto \mu\otimes \mu$ implies
\begin{equation*}
\liminf_{n\to\infty} \frac{N_{n,\e}}{n^2}=\liminf_{n\to\infty} \mu^n\boxtimes\mu^n(\triangle_\e)\geq 
\mu\otimes\mu(\triangle_\e)\geq \mu\otimes\mu(\triangle_0).
\end{equation*}
Thus, taking the $\liminf$ as $n\to \infty$ in \eqref{aboveinequality} and recalling that for $\e$ small $-\log \e >0$, we get
\[
\liminf_{n\to\infty}\cF^n(\mu^n)\geq -\pi \big(\mu\otimes\mu(\triangle_0)\big)\log \epsilon-C.
\]
Finally, the arbitrariness of $\epsilon\in (0,1)$ and the uniform boundedness of $\cF^n(\mu^n)$ imply that $\mu\otimes\mu(\triangle_0)=0$. The lemma is proved.
\end{proof}

We now prove a stability result for the functions $\phi^n$ introduced in \eqref{M3}.

\begin{lemma}\label{T13}
Let $\mu^n,\mu\in\cP(\overline\Omega)$ be such that $\mu^n$ is of the form \eqref{average} for all $n\in\mathbb N$, $\mu$ does not charge points, and $\mu^n\wsto\mu$ as $n\to\infty$.
Then the sequence of functions $\phi^n$ defined in \eqref{M3} converges strongly in $L^1(B)$ to 
the function
\begin{equation}\label{T14}
\phi(x) \coloneqq \int_{\overline\Omega} \log|x-y|\,\de\mu(y).
\end{equation} 
\end{lemma}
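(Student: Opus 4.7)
The plan is to prove the stronger convergence $\phi^n \to \phi$ in $L^2(B)$, from which the desired $L^1(B)$ convergence follows by Cauchy--Schwarz and the boundedness of $B$. The strategy is to expand $\|\phi^n-\phi\|_{L^2(B)}^2$ and express the three resulting integrals as pairings of a single continuous kernel on $\overline\Omega\times\overline\Omega$ against the product measures $\mu^n\otimes\mu^n$, $\mu\otimes\mu$, and $\mu^n\otimes\mu$.

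I would introduce the kernel
\begin{equation*}
K(y,z)\coloneqq \int_B \log|x-y|\,\log|x-z|\,\de x,\qquad y,z\in\overline\Omega,
\end{equation*}
and verify that $K\in C(\overline\Omega\times\overline\Omega)$. The pointwise value is finite because $\log|\cdot|\in L^p_{\mathrm{loc}}(\R^2)$ for every finite $p$; continuity at $(y_0,z_0)$ follows from Vitali's convergence theorem applied to $\log|x-y_k|\log|x-z_k|$, using the pointwise bound $|\log|x-y_k|\log|x-z_k||\leq \tfrac12(\log|x-y_k|)^2+\tfrac12(\log|x-z_k|)^2$, together with the fact that, by translation invariance, the family $\{(\log|\cdot-y_k|)^2\}_k$ is uniformly bounded in $L^{1+\eta}(B)$ for any $\eta>0$, and therefore equi-integrable on $B$.

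Next, by Fubini's theorem (justified since $K$ is bounded on $\overline\Omega\times\overline\Omega$),
\begin{equation*}
\|\phi^n\|_{L^2(B)}^2 = \int K\,\de(\mu^n\otimes\mu^n),\quad \|\phi\|_{L^2(B)}^2 = \int K\,\de(\mu\otimes\mu),\quad \int_B\phi^n\phi\,\de x = \int K\,\de(\mu^n\otimes\mu).
\end{equation*}
From $\mu^n\wsto\mu$ in $\cP(\overline\Omega)$ one also deduces $\mu^n\otimes\mu^n\wsto\mu\otimes\mu$ and $\mu^n\otimes\mu\wsto\mu\otimes\mu$ in $\cP(\overline\Omega\times\overline\Omega)$: weak-* convergence on product test functions $\varphi(y)\psi(z)$ is immediate, and Stone--Weierstrass then extends it to all of $C(\overline\Omega\times\overline\Omega)$, since the total masses are uniformly equal to $1$.

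Combining the continuity of $K$ with these convergences, each of the three integrals above tends to $\int K\,\de(\mu\otimes\mu)=\|\phi\|_{L^2(B)}^2$, so that
\begin{equation*}
\|\phi^n-\phi\|_{L^2(B)}^2 = \|\phi^n\|_{L^2(B)}^2 - 2\int_B \phi^n\phi\,\de x +\|\phi\|_{L^2(B)}^2 \longrightarrow 0,
\end{equation*}
giving strong $L^2(B)$ and hence strong $L^1(B)$ convergence. The main obstacle is the continuity of $K$ at the diagonal $\{y=z\}$, where two logarithmic singularities must be controlled simultaneously as $(y_k,z_k)$ tend to a common point; this is precisely what the equi-integrability argument above accomplishes.
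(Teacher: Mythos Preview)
Your proof is correct and takes a genuinely different route from the paper. The paper argues by truncation: it replaces $\log|x-y|$ by $\log|x-y|\vee(-M)$ to define $\phi^n_M$ and $\phi_M$, obtains the uniform-in-$n$ bound $\|\phi^n_M-\phi^n\|_{L^1(B)}\leq \tfrac{\pi}{2}e^{-2M}$ by direct computation in polar coordinates, uses the weak-$*$ convergence of $\mu^n$ together with dominated convergence to get $\phi^n_M\to\phi_M$ in $L^1(B)$ for each fixed $M$, and then lets $M\to\infty$ via monotone convergence; the hypothesis that $\mu$ does not charge points is invoked precisely in this last step.

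Your kernel approach is more direct and yields the stronger $L^2(B)$ convergence. In fact, since your argument establishes continuity of $K$ on all of $\overline\Omega\times\overline\Omega$, diagonal included, it never uses the assumption that $\mu$ is non-atomic, so you have actually proved a slightly stronger statement than the lemma. The cost is the Vitali/equi-integrability step needed for the continuity of $K$ at the diagonal, which the paper's truncation scheme sidesteps entirely. Both proofs are short; yours gives a sharper conclusion, while the paper's is marginally more elementary in its tools.
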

\begin{proof}
First, we notice that the function $\phi$ in \eqref{T14} belongs to $L^1(B)$, since it can be written as the convolution of an $L^1$ function with a probability measure (recall that $\mu$ is concentrated on $\overline \Omega$, thus the domain of integration $\overline\Omega$ can be replaced by the entire plane $\mathbb R^2$).
Let $M>0$ and let us consider the truncated functions
\begin{equation*}
\phi_M(x) \coloneqq \int_{\overline\Omega} \log|x-y|\vee(-M)\,\de\mu(y)\quad\text{and}\quad \phi_M^n(x) \coloneqq \int_{\overline\Omega} \log|x-y|\vee(-M)\,\de\mu^n(y).
\end{equation*}
A direct computation shows that $\norma{\phi_M^n-\phi^n}_{L^1(B)}\leq \frac\pi2 e^{-2M}$, uniformly with respect to $n$.
Indeed, if $\{a_i^n\}$ is the family of points defining the measure $\mu^n$ in \eqref{average}, by the triangle inequality we have
\[
\begin{split}
\norma{\phi_M^n-\phi^n}_{L^1(B)}&=\frac{1}{n}\int_{B}\Big |\sum_{i=1}^n\log|x-a_i^n|\vee(-M)-\log|x-a_i^n|\Big|\,\de x\\
&\leq -\frac{1}{n}\sum_{i=1}^n \int_{\{x\in \R^2:\ \log|x-a_i|<-M\}}(M+\log|x-a_i^n|)\,\de x.
\end{split}
\]
The latter integrals can be computed in polar coordinates, so that, by taking $\rho=|x - a_i|$,
\[
\norma{\phi_M^n-\phi^n}_{L^1(B)}\leq -2\pi\int_0^{e^{-M}}(M+\log \rho)\rho\,\de \rho=-\pi Me^{-2M}+\pi Me^{-2M}+\frac{\pi}{2}e^{-2M},
\]
which gives the uniform bound claimed above.

Moreover, since for every $x\in B$ the function $\log|x-\cdot|\vee(-M)$ is a continuous function on $\overline\Omega$, by the weak-* convergence $\mu^n\wsto\mu$ we deduce that $\phi^n_M(x)\to\phi_M(x)$ as $n\to\infty$. Then, since $\|\phi^n\|_\infty\leq M\vee |\log (\diam B)|$  by the Dominated Convergence Theorem we obtain $\phi_M^n\to\phi_M$ strongly in $L^1(B)$ as $n\to\infty$.

Finally, since $\mu$ does not charge points, $\log|x-y|\vee(-M)\to \log|x-y|$ as $M\to+\infty$, for all $x\in \B$ and for all $\mu$-a.~e.~$y\in B$. This with the fact $\log|x-y|<\log (\diam \B)$ allows us to use the Monotone Convergence Theorem and obtain $\phi_M(x)\to\phi(x)$ as $M\to+\infty$ for all $x\in \B$. Since also $\phi_M(x)\leq \log(\diam \B)$, by using again the Monotone Convergence Theorem, we obtain that $\phi_M\to\phi$ strongly in $L^1(\B)$ as $M\to+\infty$.

Therefore, by the triangle inequality,
\begin{equation*}
\norma{\phi^n-\phi}_{L^1(B)}\leq \frac\pi2 e^{-2M}
+\norma{\phi_M^n-\phi_M}_{L^1(B)}+\norma{\phi_M-\phi}_{L^1(B)},
\end{equation*}
and the result follows from letting first $n\to\infty$ and then $M\to+\infty$.
\end{proof}

\begin{lemma}\label{fatto1}
Let $y,z\in\B$ with $y\neq z$. There exists a positive constant $C$ independent of $y$ and $z$ such that
\begin{equation}\label{F11}
\int_{\B} \frac1{|x-y|}\frac1{|x-z|}\,\de x \leq{}  C\left ( \int_{\B} \frac{x-y}{|x-y|^2}\cdot\frac{x-z}{|x-z|^2}\,\de x+ 1 \right).
\end{equation}
\end{lemma}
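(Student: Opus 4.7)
The starting point is the geometric identity $(x-y)\cdot(x-z) = |x-m|^2 - L^2/4$, where $m \coloneqq (y+z)/2$ and $L \coloneqq |y-z|>0$. This shows that the integrand on the right-hand side of \eqref{F11} is non-negative outside the closed disk of diameter $[y,z]$ and may be negative inside it, so the difficulty is to absorb a bounded ``bad'' contribution located near this disk. The plan is to split $\B = A_1 \cup A_2$ with
\[
A_1 \coloneqq \B \cap B_L(m), \qquad A_2 \coloneqq \B \setminus B_L(m),
\]
and to estimate the two contributions to $\int_\B \de x/(|x-y||x-z|)$ separately.

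On the far region $A_2$, where $|x-m| \geq L$, the triangle inequality together with the bound $L/2 \leq |x-m|/2$ gives $\tfrac12 |x-m| \leq |x-y|,|x-z| \leq \tfrac32 |x-m|$. Combining this with the identity above,
\[
(x-y)\cdot(x-z) \geq \tfrac34 |x-m|^2 \geq \tfrac{1}{3} |x-y|\,|x-z|,
\]
so that one gets the pointwise bound
\[
\frac{1}{|x-y||x-z|} \leq 3 \,\frac{(x-y)\cdot(x-z)}{|x-y|^2|x-z|^2} \quad \text{on } A_2,
\]
which integrates directly to the desired control of $\int_{A_2}\de x/(|x-y||x-z|)$.

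For the near region $A_1 \subset B_L(m)$, I expect $\int_{A_1} \de x/(|x-y||x-z|)$ to be bounded by a universal constant. Setting $e \coloneqq (z-y)/L$, the change of variables $x = m + L\xi$ gives $x-y = L(\xi + e/2)$, $x-z = L(\xi - e/2)$, and the Jacobian $L^2$ cancels exactly, so
\[
\int_{A_1}\frac{\de x}{|x-y||x-z|} \leq \int_{B_L(m)}\frac{\de x}{|x-y||x-z|} = \int_{B_1(0)}\frac{\de \xi}{|\xi + e/2|\,|\xi - e/2|},
\]
which is a finite number independent of $e$ by rotational symmetry, since the two integrable singularities lie at $\pm e/2 \in B_1(0)$. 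By Cauchy-Schwarz, $|(x-y)\cdot(x-z)| \leq |x-y||x-z|$, so the very same constant also controls $\bigl|\int_{A_1} \frac{(x-y)\cdot(x-z)}{|x-y|^2|x-z|^2}\,\de x\bigr|$.

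Summing the two estimates and writing $\int_{A_2}(\cdots)\,\de x = \int_\B(\cdots)\,\de x - \int_{A_1}(\cdots)\,\de x$ in the far-region bound yields \eqref{F11} (with $C$ of the order of $3$ up to the additive universal constant from $A_1$). The only point that demands some attention is the verification that the constant from the near-region scaling argument is genuinely independent of $y$ and $z$; this is exactly the role of the rotational invariance invoked above, so I do not foresee any substantial obstacle beyond this elementary geometric decomposition.
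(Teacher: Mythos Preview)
Your proof is correct and follows the same near/far decomposition strategy as the paper: isolate a neighbourhood of the segment $[y,z]$ where both integrals are uniformly bounded, and show that away from it the scalar product $(x-y)\cdot(x-z)$ dominates $|x-y|\,|x-z|$ from below by a universal positive factor. The differences are purely in the execution. The paper takes as near region $D=B_{3d}(y)\cup B_{3d}(z)$ with $2d=|y-z|$, bounds $\int_D \de x/(|x-y||x-z|)\le 12\pi$ by a symmetry-plus-polar-coordinates argument, and phrases the far-region estimate via the angle $\alpha(x)$ between $x-y$ and $x-z$, observing that $\cos\alpha(x)\ge\cos\alpha_0>0$ on $\B\setminus\overline D$ for a universal $\alpha_0$. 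You instead take $A_1=\B\cap B_L(m)$, use the algebraic identity $(x-y)\cdot(x-z)=|x-m|^2-L^2/4$ to obtain the explicit pointwise constant $1/3$ on $A_2$, and handle $A_1$ by the scaling $x=m+L\xi$, which makes the $L$-independence of the near contribution transparent. Your route is slightly more self-contained (no appeal to a geometric angle bound whose uniformity in $y,z$ must be argued separately), while the paper's choice of $D$ makes the polar-coordinate computation over the near region entirely explicit; both yield the same conclusion with comparable effort.
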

\begin{proof}
Let $2d\coloneqq|y-z|$ and let us considers the two discs $B_{3d}(y)$ and $B_{3d}(z)$ of radius $3d$ centered at $y$ and $z$, respectively.
Let us denote $B_{3d}(y)\cup B_{3d}(z)\eqqcolon D=D^{(y)}\cup D^{(z)}\cup \ell$, where $D^{(y)}$ and $D^{(z)}$ are the two disjoint parts of $D$ on the $y$ and $z$ side of the (open) segment $\ell$ given by the intersection of the axis of $\overline{yz}$ with $D$ {(see Fig. \ref{farla})}.
\begin{figure}[h]
\begin{tikzpicture}
\draw (-1,0) circle (3);
\draw [fill=black] (-1,0) circle (1pt) node [above] {$y$};
\draw (1,0) circle (3);
\draw [fill=black] (1,0) circle (1pt) node [above] {$z$};
\draw[dashed] (0,2.8)--(0,-2.8);
\node at (-0.7,-1.8) {$D^{(y)}$};
\node at (0.7,-1.8) {$D^{(z)}$};
\node at (0.2,1) {$\ell$};
\end{tikzpicture}
\caption{The sets $D^{(y)}$, $D^{(z)}$ and $\ell$.}\label{farla}
\end{figure}
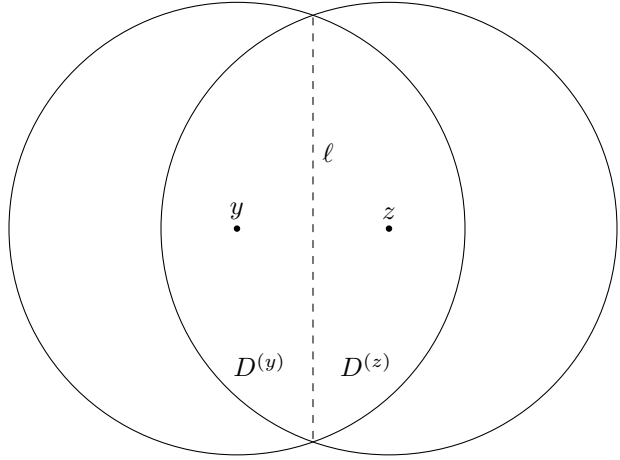
By symmetry of the set $D$, for every $A\subseteq B$ we have
\begin{equation*}
\begin{split}
\int_{A} \frac1{|x-y|}&\frac1{|x-z|}\,\de x \\
{} =& \int_{A\setminus\overline D} \frac1{|x-y|}\frac1{|x-z|}\,\de x + \int_{D^{(y)}\cap A} \frac1{|x-y|}\frac1{|x-z|}\,\de x +\int_{D^{(z)}\cap A} \frac1{|x-y|}\frac1{|x-z|}\,\de x\\
\leq{}& \int_{A\setminus\overline D} \frac1{|x-y|}\frac1{|x-z|}\,\de x + 2\int_{D^{(y)}} \frac1{|x-y|}\frac1{|x-z|}\,\de x \\
\leq{} &  \int_{A\setminus\overline D} \frac1{|x-y|}\frac1{|x-z|}\,\de x + \frac2d\int_{B_{3d}(y)} \frac1{|x-y|}\,\de x.
\end{split}
\end{equation*}
By computing the last integral over $B_{3d}(y)$ in polar coordinates centered at $y$ we obtain
\[
\int_{A} \frac1{|x-y|}\frac1{|x-z|}\,\de x \leq \int_{A\setminus\overline D} \frac1{|x-y|}\frac1{|x-z|}\,\de x + 12\pi.
\]
If we take $A=\B$ in the above inequality, we get
\begin{equation}\label{F12}
\int_{\B} \frac1{|x-y|}\frac1{|x-z|}\,\de x \leq \int_{\B\setminus\overline D} \frac1{|x-y|}\frac1{|x-z|}\,\de x + 12\pi,
\end{equation}
if instead we take $A=B\cap D$, we obtain the estimate
\begin{equation}\label{F15}
\left|\int_{B\cap D} \frac{x-y}{|x-y|^2}\cdot\frac{x-z}{|x-z|^2}\,\de x\right| \leq \int_D \frac1{|x-y|}\frac1{|x-z|}\,\de x\leq 12\pi.
\end{equation}

Moreover, we notice that
\begin{equation}\label{F14}
\frac{x-y}{|x-y|^2}\cdot\frac{x-z}{|x-z|^2}= \frac{\cos\alpha(x)}{|x-y||x-z|},
\end{equation}
where $\alpha(x)$ is the angle centered at $x$ formed by the vectors $z-x$ and $y-x$.
Since there exists an angle $0\leq\alpha_0<\pi/2$ such that $|\alpha(x)|<\alpha_0$ for all $x\in\B\setminus\overline D$, then integrating \eqref{F14} over $\B\setminus\overline D$ leads to
\begin{equation}\label{F16}
\int_{\B\setminus\overline D} \frac{x-y}{|x-y|^2}\cdot\frac{x-z}{|x-z|^2}\,\de x = \int_{\B\setminus\overline D} \frac{\cos\alpha(x)}{|x-y||x-z|}\,\de x \geq \int_{\B\setminus\overline D} \frac{\cos\alpha_0}{|x-y||x-z|}\,\de x.
\end{equation}
Therefore, by combining \eqref{F12} with \eqref{F16}, we obtain
\begin{equation*}
\int_{\B} \frac1{|x-y|}\frac1{|x-z|}\,\de x \leq \frac{1}{\cos \alpha_0}\int_{\B\setminus \overline D} \frac{x-y}{|x-y|^2}\cdot\frac{x-z}{|x-z|^2}\,\de x+12\pi,
\end{equation*}
which, together with \eqref{F15}, gives
\[
\begin{split}
\int_{\B} \frac1{|x-y|}\frac1{|x-z|}\,\de x&\leq \frac{1}{\cos \alpha_0}\int_{\B} \frac{x-y}{|x-y|^2}\cdot\frac{x-z}{|x-z|^2}\,\de x- \int_{B\cap D} \frac{x-y}{|x-y|^2}\cdot\frac{x-z}{|x-z|^2}\,\de x+12\pi\\
&\leq \frac{1}{\cos \alpha_0}\int_{\B} \frac{x-y}{|x-y|^2}\cdot\frac{x-z}{|x-z|^2}\,\de x+24\pi,
\end{split}
\]
which is \eqref{F11} with the constant $C=\max\{1/\cos \alpha_0, 24\pi\}$.
\end{proof}

We prove now some summability properties of the functions appearing in Lemma~\ref{fatto1}.
\begin{lemma}\label{claim}
Let $\mu\in\cP(\overline\Omega)$ be such that
\begin{equation}\label{quantity}
 -\int_{\overline\Om\times\overline\Om}\log|y-z|\de\mu\otimes\mu(y,z)<+\infty.
\end{equation}
Then 
\begin{itemize}
\item[(i)] the functions  $\displaystyle (y,z)\mapsto  \int_{\B} \frac{x-y}{|x-y|^2}\cdot\frac{x-z}{|x-z|^2}\,\de x$ and  $\displaystyle (y,z)\mapsto \int_{\B} \frac1{|x-y|}\frac1{|x-z|}\,\de x $ belong to $L^1(\B\times\B,\mu\otimes\mu)$;
\item[(ii)] the function $\displaystyle x\mapsto \int_{\B} \frac{x-y}{|x-y|^2}\,\de\mu(y)$ belongs to $L^2(\B  ;\mathbb R^2)$.
\end{itemize}
In particular, the function $\phi$ defined in \eqref{T14} belongs to $H^1(\B)$.
\end{lemma}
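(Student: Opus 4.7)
The plan is to compute explicitly the first integral in (i) via a Green-identity argument, expressing it as $-2\pi\log|y-z|$ plus a uniformly bounded boundary remainder. Once this identity is established, (i) and (ii) follow routinely from the assumption \eqref{quantity}, Lemma~\ref{fatto1}, and Fubini--Tonelli.

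For $y\neq z$ in $\overline\Omega$, I would fix $\epsilon<|y-z|/2$ and apply Green's identity on $D_\epsilon\coloneqq B\setminus(\overline{B_\epsilon(y)}\cup\overline{B_\epsilon(z)})$, where both $\log|x-y|$ and $\log|x-z|$ are smooth and harmonic. The boundary contribution from $\partial B_\epsilon(y)$ vanishes because $\log|x-y|=\log\epsilon$ is constant there and $\int_{\partial B_\epsilon(y)}\partial_\nu\log|x-z|\,\de\cH^1=0$ by the divergence theorem (the integrand extends harmonically inside $B_\epsilon(y)$, since $z\notin B_\epsilon(y)$). The boundary contribution from $\partial B_\epsilon(z)$ tends to $-2\pi\log|y-z|$ as $\epsilon\to 0$, since $|\nabla\log|x-z||=1/\epsilon$ on that circle exactly compensates the $2\pi\epsilon$ perimeter. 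In the limit I obtain
$$
\int_B \frac{x-y}{|x-y|^2}\cdot\frac{x-z}{|x-z|^2}\,\de x = -2\pi\log|y-z|+R(y,z),
$$
with $R(y,z)\coloneqq \int_{\partial B}\log|x-y|\,\partial_\nu\log|x-z|\,\de\cH^1$ uniformly bounded on $\overline\Omega\times\overline\Omega$, because $\partial B$ is at positive distance from $\overline\Omega$.

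Part (i) then follows by integrating this identity against $\mu\otimes\mu$: the logarithmic term is in $L^1(\mu\otimes\mu)$ by assumption \eqref{quantity} and $R$ is bounded, which handles the first function. The second function is non-negative and, by Lemma~\ref{fatto1}, dominated (up to constants) by the first function plus a constant, so it belongs to $L^1(\mu\otimes\mu)$ as well. For (ii), knowing that $(y,z)\mapsto\int_B |x-y|^{-1}|x-z|^{-1}\,\de x$ is in $L^1(\mu\otimes\mu)$ justifies Fubini--Tonelli in the identity
$$
\int_B\left|\int_B\frac{x-y}{|x-y|^2}\,\de\mu(y)\right|^2\de x=\int_{B\times B}\int_B\frac{x-y}{|x-y|^2}\cdot\frac{x-z}{|x-z|^2}\,\de x\,\de\mu\otimes\mu(y,z),
$$
whose right-hand side is finite by (i).

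Finally, for $\phi\in H^1(B)$, Minkowski's integral inequality gives $\|\phi\|_{L^2(B)}\leq\int_{\overline\Omega}\|\log|\cdot-y|\|_{L^2(B)}\,\de\mu(y)<\infty$, and Fubini combined with the distributional identity $\nabla_x\log|x-y|=(x-y)/|x-y|^2$ in $\mathcal{D}'(B)$ shows that the distributional gradient of $\phi$ coincides with the $L^2(B;\R^2)$ vector field from (ii). The main technical point is justifying the passage to the limit $\epsilon\to 0$ in the Green-identity computation, especially the sign bookkeeping on the outer normals of the shrinking disks; once that identity is secured, the remaining arguments are routine applications of Fubini and Minkowski.
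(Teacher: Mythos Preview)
Your proof is correct and follows essentially the same approach as the paper: both establish the divergence-theorem identity $\beta(y,z)=-2\pi\log|y-z|+\text{(bounded boundary term on }\partial B\text{)}$, combine it with Lemma~\ref{fatto1} to obtain (i), and then apply Fubini--Tonelli for (ii) and the identification of $\nabla\phi$. Your ordering is slightly more streamlined---you derive the identity first and read off $\beta\in L^1(\mu\otimes\mu)$ directly, then invoke Lemma~\ref{fatto1} for $\gamma$, whereas the paper first checks $\mu\otimes\mu(\triangle_0)=0$ and passes through the chain $|\beta|\le\gamma\le C(\beta+1)$---but the ingredients are identical.
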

\begin{proof}
First we notice that \eqref{quantity} yields that $\mu\otimes\mu$ does not charge the diagonal $\triangle_0$ in $\B\times\B$.
Indeed, by \eqref{quantity}, we have that, for $\e>0$ small enough,
\begin{equation*}
+\infty> -\int_{\triangle_\e} \log|y-z|\de\mu\otimes\mu(y,z) \geq -(\log \e) \mu\otimes\mu(\triangle_\e) \geq -(\log\e)\mu\otimes\mu(\triangle_0),
\end{equation*}
where $\triangle_\e\coloneqq\{(y,z)\in\B\times\B:|y-z|< \e\}$ and where we have used the monotonicity of $\mu\otimes\mu$ in the last inequality. 
By taking the limit $\e\to0$ we conclude that $\mu\otimes\mu(\triangle_0)=0$.

For brevity, we set 
\begin{equation*}
\beta(y,z)\coloneqq \int_{\B} \frac{x-y}{|x-y|^2}\cdot\frac{x-z}{|x-z|^2}\,\de x \qquad\text{and}\qquad \gamma(y,z)\coloneqq \int_{\B} \frac1{|x-y|}\frac1{|x-z|}\,\de x.
\end{equation*}
Since $|\beta(y,z)|\leq\gamma(y,z)$ and $\mu$ does not charge $\triangle_0$, we have
\[
\int_{\B\times\B} |\beta(y,z)|  \,\de\mu\otimes\mu(y,z)\leq  \int_{\B\times\B} \gamma(y,z)\,\de\mu\otimes\mu(y,z)={} \lim_{\e\to0} \int_{(\B\times\B)\setminus\overline \triangle_\e} \gamma(y,z)\,\de\mu\otimes\mu(y,z).
\]
The last estimate, together with Lemma~\ref{fatto1} and the fact that $\mu$ concentrates only on $\overline \Omega$,
yields the estimate
\begin{equation}\label{27ago}
\begin{split}
\int_{\B\times\B} |\beta(y,z)|  \,\de\mu\otimes\mu(y,z)\leq{} & C \lim_{\e\to0} \int_{(\B\times\B)\setminus\overline \triangle_\e} \beta(y,z)\,\de\mu\otimes\mu(y,z)+C \\
={} & C\lim_{\e\to0} \int_{(\overline \Omega\times\overline\Omega)\setminus\overline \triangle_\e} \beta(y,z)\,\de\mu\otimes\mu(y,z)+C.
\end{split}
\end{equation}
Taking into account that by the divergence theorem
\begin{equation*}
\beta(y,z) =\int_{\partial\B} \frac{x-y}{|x-y|^2}\cdot \nu(x)\log|x-z|\,\de\cH^1(x)- 2\pi\log|y-z|
\end{equation*}
for $\mu\otimes \mu$-a.~e.~$(y,z)\in \B\times \B$, the estimate \eqref{27ago} becomes
\[
\begin{split}
\int_{\B\times\B} | & \beta(y,z)|  \,\de\mu\otimes\mu(y,z)
\\ & \leq C \int_{\B\times\B}\left(\int_{\partial\B} \frac{x-y}{|x-y|^2}\cdot \nu(x)\log|x-z|\,\de\cH^1(x)- 2\pi\log|y-z|\right)\,
\de\mu\otimes\mu(y,z)+C.
\end{split}
\]
The boundary integral on $\partial B$ can be easily bounded by $\max\{|\log d|, |\log(\diam B)|\}/d$ where $d=\dist(\partial B, \overline \Omega)$, so that the iterated integral over $B\times B$ is finite. Moreover, the term with the logarithm belongs to $L^1(B\times B,\mu\otimes\mu)$ thanks to hypothesis \eqref{quantity}. Therefore (i) follows for the function $\displaystyle (y,z)\mapsto  \int_{\B} \frac{x-y}{|x-y|^2}\cdot\frac{x-z}{|x-z|^2}\,\de x$. The $L^1$ integrability of the function $\displaystyle (y,z)\mapsto \int_{\B} \frac1{|x-y|}\frac1{|x-z|}\,\de x $ follows in a similar way, again by applying Lemma \ref{fatto1}.

To show (ii), we use (i) and the Fubini Theorem \cite[Theorem~8.8 and the following Notes therein]{Rudin} applied to the function $(x,y,z)\mapsto1/(|x-y||x-z|)$ and to the measure spaces $(\B\times\B,\mu\otimes\mu)$ and $(\B,\cL)$. First, we deduce that the iterated integrals 
$$\int_{\B\times\B} \bigg(\int_{\B} \frac1{|x-y|}\frac1{|x-z|}\,\de x\bigg)\de\mu\otimes\mu(y,x) \quad\text{and}\quad \int_{\B} \bigg(\int_{\B\times\B}\frac1{|x-y|}\frac1{|x-z|}\,\de\mu\otimes\mu(y,x)\bigg)\,\de x$$
are finite and equal. This implies that the function $\displaystyle (y,z)\mapsto (x-y)\cdot (x-z)/(|x-y||x-z|)$ belongs to $L^1(\B\times\B,\mu\otimes\mu)$ for a.~e.~$x\in \B$. Similarly, by (i) and Fubini Theorem applied to the function  $(x,y,z)\mapsto(x-y)\cdot (x-z)/(|x-y||x-z|)$, we infer that
$$
\int_{\B\times\B}\bigg(\int_{\B}\frac{x-y}{|x-y|^2}\cdot \frac{x-z}{|x-z|^2}\,\de x\bigg) \de\mu\otimes\mu(y,z)=
\int_{\B}\bigg( \int_{\B\times\B}\frac{x-y}{|x-y|^2}\cdot \frac{x-z}{|x-z|^2}\,\de\mu\otimes\mu(y,z)\bigg)\de x
$$
is finite. In particular, the function $\displaystyle x\mapsto\int_{\B\times\B}\frac{x-y}{|x-y|^2}\cdot \frac{x-z}{|x-z|^2}\,\de\mu\otimes\mu(y,z)$ belongs to $L^1(B)$, so that its value is finite for a.~e.~$x\in\B$. As a consequence, we can apply again Fubini Theorem for a.~e.~$x\in\B$ to the function $\displaystyle (y,z)\mapsto \frac{x-y}{|x-y|^2}\cdot \frac{x-z}{|x-z|^2}\in L^1(\B\times\B,\mu\otimes\mu)$, so we can write
$$\int_{\B}\bigg( \int_{\B\times\B}\frac{x-y}{|x-y|^2}\cdot\frac{x-z}{|x-z|^2}\,\de\mu\otimes\mu(y,z)\bigg)\de x=
\int_{\B}\bigg|\int_{\B} \frac{x-y}{|x-y|^2}\,\de\mu(y)\bigg|^2\,\de x<+\infty.$$
This proves (ii).

It remains to show the $H^1$ regularity of $\phi$: on the one hand, it is easy to see that $\phi$ belongs to $W^{1,p}(\B)$ for every  $1\leq p<2$; on the other hand, a direct computation proves that its distributional gradient agrees with the function defined in (ii). The claim follows by Poincar\'e-Wirtinger inequality combined with the integrability provided in (ii).
This concludes the proof of the lemma.
\end{proof}

\section{Proofs of the main results}\label{sec.4}

We are now ready to prove Theorem~\ref{L1} and Corollaries~\ref{L2} and \ref{L3}.

\begin{proposition}[$\Gamma$-liminf inequality]\label{T30}
Let $n\in \mathbb N$ and $f^n\in H^{-1/2}(\partial \Om)$ with $\langle f^n,1\rangle=2\pi n$. Assume that $\frac1n f^n\to f$ strongly in $H^{-1/2}(\partial \Omega)$ as $n\to\infty$. Then, for every $\mu^n\wsto \mu$ in $\cP(\overline\Omega)$ as $n\to\infty$, there holds
\begin{equation}\label{T5}
\liminf_{n\to\infty} \cF^n(\mu^n)\geq\cF^\infty(\mu).
\end{equation}
\end{proposition}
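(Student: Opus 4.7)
The plan is to start from the rewriting \eqref{M5} of $\cF^n$ and, after securing the necessary compactness, to pass to the liminf term by term. Without loss of generality I assume $L\coloneqq\liminf_{n\to\infty}\cF^n(\mu^n)$ is finite (otherwise the claim is trivial) and extract a subsequence, not relabelled, along which $\cF^n(\mu^n)\to L$ and hence is uniformly bounded. Lemma~\ref{lemma-equic} then delivers uniform bounds on $\|w^n\|_{H^1(\Omega)}$, on $\|\phi^n\|_{H^1(B\setminus\overline\Omega)}$, and on $-\int_{\Omega\times\Omega}\log|x-y|\,\de\mu^n\boxtimes\mu^n$, while Lemma~\ref{fatto2} tells us that $\mu$ does not charge points. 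Therefore Lemma~\ref{T13} applies and $\phi^n\to\phi$ strongly in $L^1(B)$ with $\phi$ as in \eqref{T14}; by weak compactness and uniqueness of the limit I can also extract $w^n\rightharpoonup w$ in $H^1(\Omega)$ and $\phi^n\rightharpoonup\phi$ in $H^1(B\setminus\overline\Omega)$. A truncation-and-Fatou argument applied to the log integral gives the hypothesis of Lemma~\ref{claim}, whence $\phi\in H^1(B)$, which legitimizes the boundary-integral manipulations below.

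Next I pass to the liminf in each term of \eqref{M5}, after replacing $-\tfrac12\langle\partial_\nu\phi^n,\phi^n\rangle_{\partial\Omega}$ by means of \eqref{T10}. Weak lower semicontinuity of the Dirichlet integrals handles $\tfrac12\int_\Omega|\nabla w^n|^2$ and $\tfrac12\int_{B\setminus\overline\Omega}|\nabla\phi^n|^2$. The boundary integral $\tfrac12\int_{\partial B}\phi^n\partial_\nu\phi^n\,\de\cH^1$ passes to the limit by (smooth, even uniform) convergence, since the supports of the $\mu^n$ lie in $\overline\Omega$ and are bounded away from $\partial B$. The pairing $\tfrac1n\langle f^n,\phi^n\rangle_{\partial\Omega}$ converges to $\langle f,\phi\rangle_{\partial\Omega}$ thanks to the strong convergence $\tfrac1n f^n\to f$ in $H^{-1/2}(\partial\Omega)$ combined with the weak convergence of traces in $H^{1/2}(\partial\Omega)$ induced by $\phi^n\rightharpoonup\phi$ in $H^1(B\setminus\overline\Omega)$. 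For the double-log term I would follow the truncation strategy of Lemma~\ref{T13}: for every $M>0$, $\log|x-y|\vee(-M)$ is bounded and continuous on $\overline\Omega\times\overline\Omega$, so Lemma~\ref{nodiagonale} gives convergence of the truncated integrals; letting $M\to+\infty$ and using that $\mu\otimes\mu$ does not charge the diagonal (Lemma~\ref{fatto2}) together with monotone convergence yields
$$
\liminf_{n\to\infty}\Big(-\pi\!\int_{\Omega\times\Omega}\!\log|x-y|\,\de\mu^n\boxtimes\mu^n\Big)\geq -\pi\!\int_{\overline\Omega\times\overline\Omega}\!\log|x-y|\,\de\mu\otimes\mu.
$$

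Assembling the five contributions (and reinvoking \eqref{T10} in the limit to recombine the $B\setminus\overline\Omega$-Dirichlet integral with the $\partial B$ boundary term) produces
$$
L\geq \tfrac12\!\int_\Omega|\nabla w|^2\,\de x + \langle f,\phi\rangle_{\partial\Omega} - \tfrac12\langle\partial_\nu\phi_+,\phi\rangle_{\partial\Omega} - \pi\!\int_{\overline\Omega\times\overline\Omega}\!\log|x-y|\,\de\mu\otimes\mu.
$$
The final step is to recognize the right-hand side as $\cF^\infty(\mu)=\tfrac12\int_\Omega|\nabla U|^2$. I first identify the boundary-value problem solved by $w$: passing to the limit in $\Delta w^n=0$ gives $\Delta w=0$ in $\Omega$; and since the $a_i^n$ lie inside $\Omega$, the interior and exterior normal traces of $\phi^n$ on $\partial\Omega$ coincide, so the weak convergence of $\phi^n$ in $H^1(B\setminus\overline\Omega)$ forces $\partial_\nu\phi^n\rightharpoonup\partial_\nu\phi_+$ in $H^{-1/2}(\partial\Omega)$, whence $\partial_\nu w = f-\partial_\nu\phi_+$ on $\partial\Omega$. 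Setting $\tilde U\coloneqq w+\phi|_\Omega$ and invoking the jump identity \eqref{ferragosto} shows that $\tilde U$ solves \eqref{T2}, so $\tfrac12\int_\Omega|\nabla\tilde U|^2=\cF^\infty(\mu)$ (up to the irrelevant additive constant). Expanding $|\nabla\tilde U|^2=|\nabla w|^2+2\nabla w\cdot\nabla\phi+|\nabla\phi|^2$, integrating by parts using the PDEs for $w$ and $\phi$, using the Fubini identity $\int_{\overline\Omega}\phi\,\de\mu=\int_{\overline\Omega\times\overline\Omega}\log|x-y|\,\de\mu\otimes\mu$ and \eqref{ferragosto} to convert $\int_\Omega\phi\,\de\mu=\int_{\overline\Omega}\phi\,\de\mu-\int_{\partial\Omega}\phi\,\de\mu$, one recovers precisely the displayed lower bound, closing the argument.

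The main obstacle is the correct identification of the limit boundary flux for $w$: the weak limit of the (smooth) normal traces $\partial_\nu\phi^n$ on $\partial\Omega$ must be recognized as the \emph{exterior} trace $\partial_\nu\phi_+$—and not the interior one—a distinction that is vacuous at each fixed $n$ but becomes essential in the limit whenever $\mu$ develops mass on $\partial\Omega$. The ensuing bookkeeping between interior/exterior traces, the double-log integral and the pairing with $f$, carried out via the jump formula \eqref{ferragosto}, is the real technical core of the argument.
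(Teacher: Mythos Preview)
Your proposal is correct and follows essentially the same route as the paper's proof: both start from \eqref{M5}, invoke Lemma~\ref{lemma-equic} for compactness, replace the $\partial\Omega$ pairing via \eqref{T10}, pass to the liminf term by term (lower semicontinuity of the Dirichlet integrals, strong--weak duality for the $f$-pairing, smooth convergence on $\partial B$, truncation for the log term), and then identify the limit with $\cF^\infty(\mu)$ by setting $U=w+\phi$. The only cosmetic difference is in the final identification: the paper integrates the double-log term by parts over the whole box $B$ (yielding \eqref{T18}) and combines with \eqref{T20}, whereas you expand $|\nabla U|^2$ and integrate by parts inside $\Omega$, invoking the jump formula \eqref{ferragosto} to reconcile interior and exterior normal traces---the two computations are equivalent and arrive at the same identity.
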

\begin{proof}
Up to the extraction of a subsequence, we can consider a sequence $\{\mu^n\}$ along which the $\liminf$ in \eqref{T5} is a finite limit. 
This implies that each $\mu^n$ is of the form 
\begin{equation*}
\mu^n=\frac1n\sum_{i=1}^n\delta_{a_i^n},
\end{equation*}
for some distinct points $a_1^n,\ldots,a_n^n\in\Omega$ and, by Lemma~\ref{fatto2}, that $\mu$ does not charge points.

Since the logarithmic term in the right-hand side of \eqref{equic} is bounded below, by Lemma~\ref{lemma-equic} we infer that $\|w^n\|_{H^1(\Omega)} $ and $\|\phi^n\|_{H^1(\B\setminus\overline\Omega)}$ are bounded. Therefore (up to subsequences) $w^n$ converges weakly to some $w\in H^1(\Omega)$ and  $\phi^n$ converges weakly to $\phi\in H^1(\B\setminus\overline\Omega)$, where $\phi$ is the $L^1$-limit found in Lemma \ref{T13}.
By the divergence theorem and \eqref{M4}, recalling \eqref{tracciaesterna}, for every test function $\psi\in H^1(\B\setminus\overline\Omega)$, we have
\begin{equation}\label{tracciaest2}
\langle \partial_\nu\phi^n,\psi\rangle_{\partial \Omega}=- \int_{\B \setminus \overline \Omega} \nabla \phi^n \nabla \psi\, \de x+\int_{\partial \B}\psi\, \partial_\nu \phi^n \,\de\cH^1 \;\stackrel{n\to\infty}{\longrightarrow}\langle \partial_\nu \phi_+ , \psi \rangle_{\partial \Om}.
\end{equation}
In other words, on the boundary $\partial \Omega$, the sequence of normal traces $\partial_\nu \phi^n$ converges to the exterior normal trace $\partial_\nu \phi_+$, weakly in $H^{-1/2}(\partial \Omega)$. These convergences, together with the definition \eqref{T14} of $\phi$, the system \eqref{M4} satisfied by $w^n$, and the assumption on $f^n$, imply that  the limits $\phi$ and $w$ solve
\begin{equation}\label{T19}
\Delta \phi=2\pi\mu\quad  \text{in $B$}
\qquad\text{and}\qquad
\begin{cases}
\Delta w=0 & \text{in $\Omega$,} \\
\partial_\nu w=f-\partial_\nu\phi_+ & \text{on $\partial\Omega$.}
\end{cases}
\end{equation}
Let us first analyze the logarithmic term in the energy: using a truncation argument as in \cite[formula (3.26)]{MPS2017} we can prove that 
\[
 \liminf_{n\rightarrow\infty}-\pi\int_{\Omega\times \Omega} \log|x-y|\,\de\mu^n\boxtimes\mu^n\geq -\pi\int_{\Omega\times \Omega} \log|x-y|\,\de\mu\otimes\mu,
\]
so that the equicoercivity of the energy \eqref{equic} guarantees that the right-hand side above is bounded.
As a consequence, thanks to Lemma~\ref{claim}, the equation for $\phi$ in \eqref{T19} implies in particular that $\mu\in H^{-1}(B)$.

By \eqref{T10}, we can write the energy \eqref{M5} as
\begin{equation*}
\begin{split}
\cF^n(\mu^n)={} &  \frac1{2}\int_{\Omega} |\nabla w^n|^2\,\de x + \frac12\int_{\B\setminus\overline\Omega} |\nabla \phi^n|^2\,\de x -\pi\int_{\Omega\times \Omega} \log|x-y|\,\de\mu^n\boxtimes\mu^n\\
& - \frac12\int_{\partial\B} \phi^n \partial_\nu\phi^n\,\de\cH^1+\frac 1n \langle f^n,\phi^n\rangle_{\partial\Omega},
\end{split}
\end{equation*}
and taking the $\liminf$ as $n\to\infty$, we obtain
\begin{equation}\label{T17}
\begin{split}
\liminf_{n\to\infty} \cF^n(\mu^n) \geq{} &  \frac1{2}\int_{\Omega} |\nabla w|^2\,\de x + \frac12\int_{\B\setminus\overline\Omega} |\nabla \phi|^2\,\de x -\pi\int_{\overline\Omega\times\overline\Omega} \log|x-y|\,\de\mu\otimes\mu\\
& - \frac12\int_{\partial\B} \phi\, \partial_\nu\phi\,\de\cH^1+\langle f,\phi\rangle_{\partial\Om}.
\end{split}
\end{equation}
Here we have used the assumption $\frac 1n f^n\to f$ strongly in $H^{-1/2}(\partial \Omega)$, the lower semicontinuity of the $H^1$ norm, and that of the term with the logarithm.
Recalling \eqref{T14} and \eqref{T19},  integrating by parts the term with the logarithm we have
\begin{equation}\label{T18}
\begin{split}
-\pi\int_{\overline\Omega\times\overline\Omega} \log|x-y|\,\de\mu\otimes\mu & = -\pi\int_{\B\times\B} \log|x-y|\,\de\mu\otimes\mu \\
&=\frac12\int_{\B} |\nabla\phi|^2\,\de x-\frac12\int_{\partial\B} \phi\,\partial_\nu\phi_+\,\de\cH^1 \\
&=\frac12\int_{\Omega} |\nabla\phi|^2\,\de x+\frac12\int_{\B\setminus\Omega} |\nabla\phi|^2\,\de x-\frac12\int_{\partial\B} \phi\,\partial_\nu\phi_+\,\de\cH^1 \\
&=\frac12\int_{\Omega} |\nabla\phi|^2\,\de x-\frac12\langle \partial_\nu\phi_+,\phi\rangle_{\partial\Omega},
\end{split}
\end{equation}
where in the last equality we have used \eqref{tracciaest2}.
Moreover, integrating by parts in $\B\setminus\overline\Omega$, we have
\begin{equation}\label{T20}
\frac12\int_{\B\setminus\overline\Omega} |\nabla \phi|^2\,\de x- \frac12\int_{\partial\B} \phi\, \partial_\nu\phi\,\de\cH^1=- \frac12\langle \partial_\nu\phi_+ , \phi \rangle_{\partial\Omega},
\end{equation}
so that, by adding \eqref{T18} and \eqref{T20}, the right-hand side of \eqref{T17} becomes
\begin{equation*}
 \frac1{2}\int_{\Omega} |\nabla w|^2\,\de x +\frac12\int_{\Omega} |\nabla\phi|^2\,\de x+\langle f-\partial_\nu\phi_+,\phi\rangle_{\partial\Omega}.
\end{equation*}
Defining $U\coloneqq w+\phi$ and using \eqref{T19} and Remark~\ref{allafinelocitiamo}, we have that $U$ solves \eqref{T2} and the expression above is precisely the functional $\cF^\infty(\mu)$ defined in \eqref{T1}.
This concludes the proof of \eqref{T5} and of the proposition.
\end{proof}

\begin{proposition}[$\Gamma$-limsup inequality]\label{T3a}
Let $n\in \mathbb N$ and $f^n\in H^{-1/2}(\partial \Om)$ with $\langle f^n,1\rangle=2\pi n$. Assume that $\frac1n f^n\to f$ strongly in $H^{-1/2}(\partial \Omega)$ as $n\to\infty$.
Let $\mu\in \cP(\overline\Omega)$.
Then there exists a sequence of measures $\mu^n\in \cP(\overline\Omega)$ such that $\mu^n\wsto\mu$ in $\cP(\overline\Omega)$ as $n\to\infty$ and  
\begin{equation}\label{T5a}
\limsup_{n\to\infty} \cF^n(\mu^n)\leq\cF^\infty(\mu).
\end{equation}
\end{proposition}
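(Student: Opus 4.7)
The strategy I would follow is to build the recovery sequence in three stages: first reduce to the finite-energy case, then approximate $\mu$ by smooth probability densities compactly supported in $\Omega$, next discretise each smooth density by an empirical measure on a regular grid, and finally extract a diagonal subsequence. If $\cF^\infty(\mu)=+\infty$ there is nothing to prove, so I assume $\mu\in H^{-1}(B)$; by Lemma~\ref{claim} this is equivalent to $\phi$ in \eqref{T14} belonging to $H^1(B)$ and to the finiteness of the iterated integral in \eqref{quantity}.

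For the first approximation I fix an interior point $x_0\in\Omega$, set $T_k(x)\coloneqq x_0+(1-1/k)(x-x_0)$, and let $\tilde\mu_k\coloneqq (T_k)_\#\mu$, whose support has positive distance $\delta_k$ from $\partial\Omega$. I then mollify, $\mu_k\coloneqq \eta_{\varepsilon_k}\ast\tilde\mu_k$ with a standard mollifier of radius $\varepsilon_k<\delta_k$, obtaining a smooth probability density $\mu_k$ supported in a compact subset of $\Omega$ with $\mu_k\wsto\mu$. Since $\mu\in H^{-1}(B)$, continuity of translations and mollifications in $H^{-1}$ yields $\mu_k\to\mu$ in $H^{-1}(B)$, whence the logarithmic potentials $\phi_k$ converge to $\phi$ in $H^1(B)$ by Newton potential theory. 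This gives in turn $w_k\to w$ in $H^1(\Omega)$ by continuous dependence on the Neumann datum of \eqref{T19}, and all boundary duality terms in the decomposition of $\cF^\infty$ pass to the limit, producing $\cF^\infty(\mu_k)\to\cF^\infty(\mu)$.

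For the second approximation, given a smooth density $\mu_k=\rho_k\,\de x$, I partition a neighbourhood of its support into cubes $\{Q_j^n\}$ of side $\sim n^{-1/2}$, assign to each cube $N_j^n\coloneqq \lfloor n\mu_k(Q_j^n)\rfloor$ points (modifying finitely many cubes so that $\sum_j N_j^n=n$), and place them on a uniform sub-lattice in $Q_j^n$. The empirical measure $\mu_k^n$ converges weak-$*$ to $\mu_k$ by Riemann summation. I then verify convergence for each term of \eqref{M5}: the potentials $\phi_k^n$ converge uniformly on $B\setminus\overline\Omega$ (since $\mu_k^n$ is supported in a fixed compact subset of $\Omega$ for $n$ large), so the two boundary pairings pass to the limit thanks to $\tfrac1n f^n\to f$; the harmonic part $w_k^n$ converges in $H^1(\Omega)$ by continuous dependence on its Neumann datum; and the self-interaction $\tfrac1{n^2}\sum_{i\neq j}\log|a_i^n-a_j^n|$ converges to $\iint\log|x-y|\,\de\mu_k\otimes\mu_k$ by Riemann summation of the off-diagonal part. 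A diagonal extraction $n=n_k$ then produces a sequence $\mu^{n_k}\wsto\mu$ with $\limsup_k\cF^{n_k}(\mu^{n_k})\leq \cF^\infty(\mu)$, establishing \eqref{T5a}.

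The main obstacle I expect is the near-diagonal contribution to the logarithmic self-interaction at the discretisation step: one needs that only $O(n)$ pairs satisfy $|a_i^n-a_j^n|\leq C n^{-1/2}$, each contributing $|\log|a_i^n-a_j^n||\leq \tfrac12\log n+O(1)$, so that their total weight is $O(n^{-1}\log n)\to 0$ after the $n^{-2}$ rescaling in \eqref{box}. The sub-lattice construction is designed precisely to enforce a minimal pairwise separation $\geq c\,n^{-1/2}$, which is possible thanks to the uniform bound $\norma{\rho_k}_{L^\infty}<\infty$ guaranteeing only $O(1)$ points per cube. A secondary technical point is that the inward scaling $T_k$ before mollification is essential: the limiting $\mu$ may concentrate entirely on $\partial\Omega$ (as in Corollary~\ref{L3}), and without shrinking its support first, a direct mollification would create mass outside $\overline\Omega$.
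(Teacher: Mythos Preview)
Your overall architecture --- reduce to a dense class of nice densities compactly supported in $\Omega$, discretise those, then diagonalise --- matches the paper's two-step scheme, and your handling of the discretisation step (uniform convergence of the potentials outside $\Omega$, stability of the harmonic Neumann problem, and the near-diagonal count for the logarithmic term) is essentially the same as what the paper invokes via \cite[formula~(3.32)]{MPS2017}. The paper uses piecewise constant densities instead of smooth ones as the intermediate class, but that is a matter of taste.

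There is, however, a genuine gap in your first approximation step. The inward homothety $T_k(x)=x_0+(1-\tfrac1k)(x-x_0)$ maps $\overline\Omega$ into $\Omega$ only if $\Omega$ is star-shaped with respect to $x_0$. The paper explicitly works under the sole hypothesis that $\Omega$ is a bounded Lipschitz domain and stresses that simple connectedness is \emph{not} assumed. For an annulus, say, no choice of $x_0$ makes $T_k(\overline\Omega)\subset\Omega$: points on the inner boundary opposite to $x_0$ are pushed through the hole. Thus your $\tilde\mu_k=(T_k)_\#\mu$ need not even be supported in $\overline\Omega$, and the claim that its support sits at positive distance from $\partial\Omega$ fails. (A minor related point: even when the homothety does work, the convergence $(T_k)_\#\mu\to\mu$ in $H^{-1}(B)$ is not ``continuity of translations'' but continuity of the push-forward under a family of dilations; it is true but deserves its own one-line justification.)

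The paper circumvents this geometric obstruction by a purely combinatorial device: it covers $\overline\Omega$ by the cells $\Omega_j^h$ of a fine square grid, keeps the mass in each interior cell, and transfers the mass $\mu(\Omega_j^h)$ of each boundary-touching cell to the nearest interior cell. This needs no assumption on the shape of $\Omega$ beyond Lipschitz regularity, and still yields $\mu_h\to\mu$ strongly in $H^{-1}(B)$ (shown there via convergence of $\|\nabla\phi_h\|_{L^2(B)}$). If you want to keep your mollification route, you could replace the global homothety by a Lipschitz retraction of a neighbourhood of $\overline\Omega$ onto a compact subset of $\Omega$ (which exists for Lipschitz domains), or simply adopt the paper's grid-based mass transfer before mollifying.
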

\begin{proof}
In the case $\mu\notin H^{-1}(B)$ the inequality is trivially satisfied by choosing $\mu^n=\mu$. 
Therefore, let us assume that $\mu\in H^{-1}(B)$.
Since $\mathcal F^\infty(\mu)<+\infty$, in order to prove \eqref{T5a} we look for a sequence of approximating measures $\mu^n$ of the form \eqref{average}, for $n$ distinct points $a^n_1,\ldots,a_n^n\in\Omega$, which implies that $\mathcal F^n(\mu^n)$ is finite and can be written as in \eqref{M5}.

\textbf{\emph{Step 1}.} We first prove the $\limsup$ inequality assuming that $\mu$ is absolutely continuous with respect to the Lebesgue measure and is of the form
\begin{equation}\label{T22}
\de\mu=\sum_{j}\alpha_j\1_{\Omega_j}\de x,
\end{equation}
where $\Omega_j$ is a finite family of pairwise disjoint Borel sets defined as follows. For a fixed parameter $h>0$, we define $Q_j$ as the collection of all open squares with corners on the lattice $h\mathbb Z^2$ such that $\overline{Q}_j \cap \Omega \neq \emptyset$. Denoting by $\Gamma_j$ the closure of the right and top sides of $\partial Q_j$, we set (see Figure~\ref{quadratelli})
\begin{equation}\label{22ago}
 \Omega_j\coloneqq (\overline{Q}_j \cap \overline\Omega) \setminus (\Gamma_j\setminus\partial\Omega).
\end{equation}
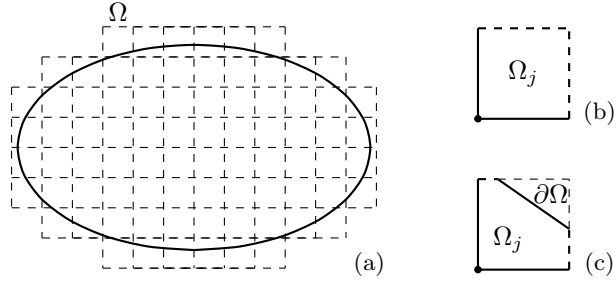
\begin{figure}[h]
\begin{tikzpicture}[scale=0.4]

\draw[thick, domain=-3.141: 3.141, smooth]plot({5.8*sin(\x r)}, {3.4*cos(\x r)}) node at (-2.5,4.5) {$\Omega$};
\coordinate (a1) at (-6,2);
\coordinate (a2) at (-5,3);
\coordinate (a3) at (-4,3);
\coordinate (a4) at (-3,4);
\coordinate (a5) at (-2,4);
\coordinate (a6) at (-1,4);
\coordinate (a7) at (0,4);
\coordinate (a8) at (1,4);
\coordinate (a9) at (2,4);
\coordinate (a10) at (3,4);
\coordinate (a11) at (4,3);
\coordinate (a12) at (5,3);
\coordinate (a13) at (6,2);

\coordinate (b1) at (-6,-2);
\coordinate (b2) at (-5,-3);
\coordinate (b3) at (-4,-3);
\coordinate (b4) at (-3,-4);
\coordinate (b5) at (-2,-4);
\coordinate (b6) at (-1,-4);
\coordinate (b7) at (0,-4);
\coordinate (b8) at (1,-4);
\coordinate (b9) at (2,-4);
\coordinate (b10) at (3,-4);
\coordinate (b11) at (4,-3);
\coordinate (b12) at (5,-3);
\coordinate (b13) at (6,-2);

\draw [dashed] (a1) -- (b1);
\draw [dashed] (a2) -- (b2);
\draw [dashed] (a3) -- (b3);
\draw [dashed] (a4) -- (b4);
\draw [dashed] (a5) -- (b5);
\draw [dashed] (a6) -- (b6);
\draw [dashed] (a7) -- (b7);
\draw [dashed] (a8) -- (b8);
\draw [dashed] (a9) -- (b9);
\draw [dashed] (a10) -- (b10);
\draw [dashed] (a11) -- (b11);
\draw [dashed] (a12) -- (b12);
\draw [dashed] (a13) -- (b13);

\draw [dashed] (a6) -- (a8);
\draw [dashed] (a5) -- (a9);
\draw [dashed] (a4) -- (a10);
\draw [dashed] (a3) -- (a11);
\draw [dashed] (a2) -- (a12);
\draw [dashed] (a1) -- (a13);
\draw [dashed] (-6,1) -- (6,1);
\draw [dashed] (-6,0) -- (6,0);
\draw [dashed] (-6,-1) -- (6,-1);
\draw [dashed] (b6) -- (b8);
\draw [dashed] (b5) -- (b9);
\draw [dashed] (b4) -- (b10);
\draw [dashed] (b3) -- (b11);
\draw [dashed] (b2) -- (b12);
\draw [dashed] (b1) -- (b13);

\coordinate (L) at (5.77,-3.85) node at (5.77,-3.85) {\small (a)};

\end{tikzpicture} 
\ \ \ \ \  \ \ \ 
\begin{tikzpicture}[scale=0.4]
\draw[thick] (0,5) -- (3,5);
\draw [thick] (0,5) -- (0,8);
\draw[dashed,thick] (3,5) -- (3,8);
\draw [dashed,thick] (0,8) -- (3,8);
\draw [fill=black] (0,5) circle (3pt);
\coordinate (Rb) at (4,5.2) node at (4,5.2) {\small (b)};
\coordinate (Rc) at (4,0.2) node at (4,0.2) {\small (c)};
\coordinate (E1) at (1,1) node at (1,1) {$\Omega_j$};
\coordinate (E2) at (2.4,2.5) node at (2.4,2.5) {$\partial\Omega$};
\coordinate (E3) at (1.5,6.5) node at (1.5,6.5) {$\Omega_j$};
\coordinate (z1) at (0.65,3);
\coordinate (z2) at (3,1.35);
\draw[thick] (0,0) -- (3,0);
\draw [thick] (0,0) -- (0,3);
\draw[dashed,thick] (3,0) -- (z2);
\draw[dashed,thin] (z2) -- (3,3);
\draw [dashed,thick] (0,3) -- (z1);
\draw [dashed,thin] (z1) -- (3,3);
\draw [thick] (z1) -- (z2);
\draw [fill=black] (0,0) circle (3pt);
\end{tikzpicture} 
\caption{(a) Dashed, the family of open squares $Q_j$. (b) Definition of the Borel sets $\Omega_j$ when $\overline{Q}_j\cap \partial \Omega = \emptyset$. (c)  Definition of the Borel sets $\Omega_j$ when $\overline{Q}_j\cap \partial \Omega \neq \emptyset$.}\label{quadratelli}
\end{figure}

The coefficients $\alpha_j$ in \eqref{T22} are such that $\alpha_j\in[0,1]$ for all $j$, $\alpha_j=0$ whenever $\overline\Omega_j\cap \partial \Omega\neq \emptyset$, and  $\sum_j \alpha_j|\Omega_j|=1$.
With this choice of the $\alpha_j$'s, the measure $\mu$ is a piecewise constant probability measure with compact support in $\Omega$.

For every $n$, we construct $\mu^n\coloneqq\frac1n\sum_{i=1}^n\delta_{a_i^n}$, for some distinct points $a_1^n,\ldots,a_n^n\in\Omega$, such that $\mu^n\wsto\mu$, by placing a suitable number of dislocations of the order of $\lfloor n\alpha_j|\Omega_j|\rfloor$ in each $\Omega_j$ (this is a standard construction, see, e.g., \cite{BJM2011,MPS2017} and also \cite{BS2007,TZ2013,TZ2015}). 
Since the dislocations remain at positive distance to $\partial \Omega$ we have that the functions 
$\phi^n$ associated with $\mu^n$ via \eqref{M4} converge strongly in $H^1(\B\setminus\overline\Omega)$ to the function $\phi$ defined in \eqref{T14};  hence the first three terms in \eqref{M5} converge. Moreover, by \cite[formula~(3.32)]{MPS2017} it follows that
\[
\limsup_j -\pi\int_{\Omega\times\Omega} \log|x-y|\,\de\mu^n\boxtimes\mu^n\leq -\pi\int_{\overline\Omega\times\overline\Omega} \log|x-y|\,\de\mu\otimes\mu.
\]
Therefore, \eqref{T5a} is proved when $\mu$ is a piecewise constant measure with compact support in $\Omega$.

\textbf{\emph{Step 2}.} By \cite{dalmaso}, to obtain \eqref{T5a} for a general $\mu\in H^{-1}(B)$, it is enough to prove that the class of piecewise constant measures with compact support in $\Omega$ is \emph{dense in energy}, namely that there exists a sequence of piecewise constant measures $\mu_h\in\cP(\overline\Omega)$ with compact support in $\Omega$ such that $\mu_h\wsto\mu$ and 
\begin{equation}\label{T5b}
\limsup_{h\to\infty} \cF^\infty(\mu_h)\leq\cF^\infty(\mu).
\end{equation}
We construct the approximating measures $\mu_h$ of the form \eqref{T22}, by choosing coefficients $\alpha_j^h$ and sets $\Omega_j^h$ as follows.
We consider a parameter $h>0$ and we define the collection $\{Q_j^h\}_j$ of open squares intersecting $\Omega$ as in Step 1. Accordingly, we define the $\Gamma_j^h$'s, and the $\Omega_j^h$'s as in \eqref{22ago}, namely $\Omega_j^h\coloneqq (\overline{Q}{}_j^h \cap \overline\Omega) \setminus (\Gamma_j^h\setminus\partial\Omega)$.
We observe that, for every $h>0$, $\Omega_j^h\cap\Omega_k^h=\emptyset$ if $j\neq k$, that $\overline\Omega=\bigcup_j \Omega_j^h$, and that for any $j$ we have $|\Omega_j^h|>0$.

For every $j$, we define $\beta_j^h:=\mu(\Omega_j^h)/|\Omega_j^h|$,
so that the approximating measure $\mu_h$ of the form \eqref{T22} is constructed as follows: for each $j$
\begin{itemize}
\item[(i)] if $\Omega_j^h\cap \partial \Omega=\emptyset$ then we set $\alpha_j^h:=\beta_j^h$.
\item[(ii)] otherwise, if $\Omega_j^h\cap \partial \Omega\neq \emptyset$ we set $\alpha_j^h=0$ and transfer the mass $\beta_j^h$ to $\Omega_K^h$, where $\Omega_K^h$ is a square such that $\Omega_K^h\cap \partial\Omega=\emptyset$ and is the closest to $\Omega_j^h$ (clearly, there can be more than one such $\Omega_K^h$), that is
$$\alpha_K^h\coloneqq \beta_K^h+\beta_j^h\frac{|\Omega_j^h|}{|\Omega_K^j|}.$$
\end{itemize}
We finally define $\mu^h$ as $\de\mu^h\coloneqq \sum_j \alpha_j^h \1_{\Omega_j^h}\de x$ and notice that it has compact support in $\Omega$, as desired.

We claim that these measures $\mu_h$ converge strongly to $\mu$ in $H^{-1}(B)$.
We first observe that $2\pi\mu_h(x)=\Delta \phi_h$, where $\phi_h(x)=\int_B \log|x-y|\,\de\mu_h(y)$, and similarly $2\pi\mu=\Delta\phi$, where $\phi$ is defined in \eqref{lorecolliamo}.
By using the definition of the $H^{-1}$ norm, proving the convergence of $\mu_h$ to $\mu$ is equivalent to proving that $\nabla\phi_h\to\nabla\phi$ strongly in $L^2(B;\R^{2})$. 
Since
$$\int_B |\nabla \phi_h|^2\,\de x = \int_{\partial B} \phi_h\,\partial_\nu\phi_h\,\de\cH^1-2\pi\int_{B\times B} \log|x-y|\,\de\mu_h\otimes\mu_h$$
and the boundary term converges to $\int_{\partial B} \phi\,\partial_\nu\phi\,\de\cH^1$, invoking the lower semincontinuity of the $L^2$ norm, we are left with proving that 
$$\limsup_{h\to\infty} -\int_{B\times B} \log|x-y|\,\de\mu_h\otimes\mu_h\leq -\int_{B\times B} \log|x-y|\,\de\mu\otimes\mu.$$
This can be proved using a truncation argument as in \cite[equation (3.27)]{MPS2017}, so that we obtain 
\begin{equation}\label{numeraquellaformula}
\norma{\nabla \phi_h}_{L^2(B;\R^2)}\to \norma{\nabla \phi}_{L^2(B;\R^2)}.
\end{equation}
The boundedness of $\norma{\nabla \phi_h}_{L^2(B;\R^{2})}$ implies that $\nabla \phi_h$ converges weakly in $L^2(B;\R^2)$ to its distributional limit $\nabla \phi$, so that \eqref{numeraquellaformula} allows us to conclude that the convergence of $\nabla\phi_h$ to $\nabla\phi$ is indeed strong in $L^2(B;\R^{2})$.

Thus the solutions $U_h$ associated with $\mu_h$ by \eqref{T2} converge strongly in $H^1(\Omega)$ to the solution $U$ associated with $\mu$.
Then \eqref{T5b} follows by definition of $\cF^\infty$.
\end{proof}

\begin{proof}[Proof of Theorem \ref{L1}]
Propositions~\ref{T30} and~\ref{T3a} imply that the functionals $\mathcal{F}^n$ defined in \eqref{T9} $\Gamma$-converge to the functional $\mathcal{F}^\infty$ defined in \eqref{T1} (see \cite{dalmaso}).
Theorem~\ref{L1} is then proved. 
\end{proof}
\begin{proof}[Proof of Corollary~\ref{L2}]
The proof follows from \cite[Corollary~7.20]{dalmaso}. 
The strict convexity of $\mathcal{F}^\infty$ implies that its minimizer is unique (and thus the convergence holds without extracting subsequences).
\end{proof}
\begin{proof}[Proof of Corollary~\ref{L3}]
The uniqueness of the minimizer of $\mathcal{F}^\infty$ obtained in Corollary~\ref{L2} and the fact that $\mathcal{F}^\infty(\mu)\geq 0$ for any $\mu\in\mathcal P(\overline\Omega)$ imply that it is enough to show that the measure $\mu^\infty$ in \eqref{limite} is such that $\mathcal{F}^\infty(\mu^\infty)=0$.
This assertion is a simple verification obtained by integrating by parts \eqref{T1} and using \eqref{T2}.
The limit \eqref{vanishinglimit} is granted again by Corollary~\ref{L2}.
\end{proof}

\subsection*{Acknowledgements}
The authors are members of the Gruppo Nazionale per l'Analisi Matematica, la Probabilit\`a e le loro Applicazioni (GNAMPA) of the Istituto Nazionale di Alta Matematica (INdAM).
The INdAM-GNAMPA project 2015 \href{http://fcm2.weebly.com/}{\emph{Fenomeni Critici nella Meccanica dei Materiali: un Approccio Variazionale}} partially supported this research.
M.M.\@ acknowledges partial support from the DFG Project \emph{Identifikation von Energien durch Beobachtung der zeitlichen Entwicklung von Systemen} (FO 767/7).
R.S.\@  is grateful to the Erwin  Schr\"odinger Institute for the financial support obtained during the last part of the present research.
D.Z.\@ acknowledges partial support from the INdAM-GNAMPA
project 2018 \emph{Ottimizzazione Geometrica e Spettrale}.

\end{document}